\mathchardef\gt="313E  % arithmetic
\mathchardef\lt="313C  % strict order
\mathchardef\colon="303A  % :=
\edef\cdrestoreat{%%
\noexpand\catcode\lq\noexpand\@=\the\catcode\lq\@}\catcode\lq\@=11
\def\thmitem{\def\theenumi{\@roman\c@enumi}
\def\labelenumi{{\normalfont(\theenumi)}}}
\newcounter{P@sc}\newcounter{P@scp}\newcounter{P@t}\newlength{\P@x}
\newlength{\P@xa}\newlength{\P@xb}\newlength{\P@y}\newlength{\P@ya}
\newlength{\P@yb}\newsavebox{\P@pt}
\def\bezier#1(#2,#3)(#4,#5)(#6,#7){\c@P@sc#1\relax
 \c@P@scp\c@P@sc \advance\c@P@scp\@ne
 \P@xb #4\unitlength \advance\P@xb -#2\unitlength \multiply\P@xb \tw@
 \P@xa #6\unitlength \advance\P@xa -#2\unitlength
 \advance\P@xa -\P@xb \divide\P@xa\c@P@sc
 \P@yb #5\unitlength \advance\P@yb -#3\unitlength \multiply\P@yb \tw@
 \P@ya #7\unitlength \advance\P@ya -#3\unitlength
 \advance\P@ya -\P@yb \divide\P@ya\c@P@sc
 \setbox\P@pt\hbox{\vrule height\@halfwidth depth\@halfwidth 
 width\@wholewidth}\c@P@t\z@ 
 \put(#2,#3){\@whilenum{\c@P@t<\c@P@scp}\do
 {\P@x\c@P@t\P@xa \advance\P@x\P@xb \divide\P@x\c@P@sc \multiply\P@x\c@P@t 
 \P@y\c@P@t\P@ya \advance\P@y\P@yb \divide\P@y\c@P@sc \multiply\P@y\c@P@t 
 \raise \P@y \hbox to \z@{\hskip \P@x\unhcopy\P@pt\hss}\advance\c@P@t\@ne}}}
\def\sig{\mbox{\vbox to 0pt{\vss\hbox to 0pt{\setlength{\unitlength}{.35cm}
\begin{picture}(5,5)
      \bezier140(1,3)(1,0)(1.5,0) % top-down P
      \bezier120(1.5,0)(1.5,1.5)(.5,2.7) % bottom-up P
      \bezier180(.5,2.7)(0,4)(4,3.5) % over top P
      \bezier140(4,3.5)(4.5,3.2)(1.5,2.7) % rounding P
      \multiput(2,.5)(.4,.05)3{% doing i, n
      \bezier30(0,0)(.2,0)(.2,.3) % up leg
      \bezier30(.2,.3)(.2,.05)(.4,.05) % down leg
      }
      \bezier30(3.2,.65)(3.27,.67)(3.3,.8) % up short leg
      \bezier30(3.3,.8)(3.34,.6)(3.53,.66) % bottom o
      \bezier30(3.53,.66)(3.6,.73)(3.58,.8) % right o
      \bezier30(3.58,.8)(3.555,.99)(3.4,.96) % top o
      \bezier10(2.3,1)(2.3,1.01)(2.31,1.02) % dotting i
\end{picture}\hss}}}}
\def\tt{\@ifnextchar_{\top\kern-.5ex}{\top}}
\newtheoremstyle{teorema}{\topsep}{\topsep}
{\thmitem\slshape}{}{\bf}{{\normalfont.}}{.5em}{}
\newtheoremstyle{definizione}{\topsep}{\topsep}
{\thmitem\normalfont}{}{\bf}{{\normalfont.}}{.5em}{}
\def\namefont#1{{\sc #1}}
\theoremstyle{teorema}
\newtheorem{theorem}{\namefont{Theorem}}[section]
\newtheorem{lemma}[theorem]{\namefont{Lemma}}
\newtheorem{prop}[theorem]{\namefont{Proposition}}
\newtheorem{cor}[theorem]{\namefont{Corollary}}
\theoremstyle{definizione}
\newtheorem{definition}[theorem]{\namefont{Definition}}
\newtheorem{remark}[theorem]{\namefont{Remark}}
\newtheorem{exms}[theorem]{\namefont{Examples}}
\def\dfn#1{{\bfseries\itshape #1\/}}
\def\cat#1{\ensuremath{\mathcal{#1}}}
\def\Cat#1{\ensuremath{{\normalfont\textsf{\bfseries #1}}}}
\def\id#1{\ensuremath{\mathrm{id}_{#1}}}
\def\op{{}^{\textrm{\scriptsize op}}}
\def\op{^{\textrm{\scriptsize op}}}
\def\op{\strut^{\textrm{\scriptsize op}}}
\def\exl{_{\textrm{\scriptsize ex}}}
\def\exr{_{\textrm{\scriptsize ex/reg}}}
\def\reg{_{\textrm{\scriptsize reg}}}
\def\Gr(#1){\ensuremath{\mathcal{G}_{#1}}}
\def\des#1{\ensuremath{\mathcal{D}\kern-.3ex\textit{es\kern.2ex}_{#1}}}
\let\Land\wedge
\let\ForalL\forall \def\Forall#1.{\ForalL_{#1}}
\let\ExistS\exists \def\Exists#1.{\ExistS_{#1}}
\def\LGE{\mathrel{\bgroup\ooalign{\hfil\raise.8ex\hbox{$\lt$}\hfil
 \crcr\hfil\raise-.5ex\hbox{$\eqslantgtr$}}\egroup}}
\def\Lge{\mathrel{\bgroup
 \ooalign{\hfil\raise.4ex\hbox{$\scriptscriptstyle\lt$}\hfil
 \crcr\hfil\raise-.3ex\hbox{$\scriptscriptstyle\eqslantgtr$}}\egroup}}
\def\gel{=}
\def\pr{\mathrm{pr}}
\def\cmp#1{\ensuremath{\{\kern-2.5pt|{#1}|\kern-2.5pt\}}}
\def\EH{\Cat{ED}\xspace}
\def\QH{\Cat{QED}\xspace}
\def\QX{\Cat{CED}\xspace}
\def\Qex#1#2{\relax\ifx#2q\ensuremath{\left[{#1}\right]}\fi
\ifx#2c\ensuremath{{\left|{#1}\right|}}\fi
\ifx#2x\ensuremath{\widehat{#1}}\fi}
\def\QEx#1#2{\ensuremath{{({#1})_{_{\textrm{\scriptsize #2}}}}}}
\def\Q#1{\Qex{#1}{q}}
\def\X#1{\QEX{#1}{x}}
\def\P#1{\Qex{#1}{c}}
\def\Q#1{\QEx{#1}{q}}
\def\X#1{\QEx{#1}{x}}
\def\P#1{\QEx{#1}{c}}
\def\R#1{\QEx{#1}{r}}
\def\D{{\rotatebox[origin=c]{180}{\ensuremath{E}}\kern-.3ex}}
\def\B{{\rotatebox[origin=c]{180}{\ensuremath{A}}\kern-.6ex}}
\def\QD{\Q{{\rotatebox[origin=c]{180}{\ensuremath{E}}}}\kern-.3ex}
\def\EC#1{\ensuremath
 {\left\lfloor\left.\kern-.2ex{#1}\kern-.2ex\right\rceil\right.}}
\def\Strut{\hbox{\vrule height.75em depth.35em width0pt}}
\def\ec#1{\ensuremath
 {\left\lfloor\left.\kern-.3ex\Strut{#1}\Strut\kern-.3ex\right\rceil\right.}}
\def\ec#1{\ensuremath{\left[{#1}\right]}}
\def\dpd{\mathchoice{\textstyle\prod}{\prod}{\prod}{\prod}\kern-.2ex\strut}
\def\prp{\kern.5ex\mbox{\normalfont\it prop}\kern.5ex}
\def\prps{\kern.5ex\mbox{\normalfont\it prop}_s\kern.5ex}
\def\ie{{\textit{i.e.}}\xspace}
\def\eg{{\textit{e.g.}}\xspace}
\def\loccit{{\textit{loc.cit.}}\xspace}
\long\def\beginskip#1\endskip{\newpage}
\date{}
\begin{document}

\title{
Elementary quotient completion
}
\author{Maria Emilia Maietti\thanks{%
Dipartimento di Matematica Pura ed Applicata,
Universit\`a degli Studi di Padova,
via Trieste 63, 35121 Padova, Italy,
email:~\texttt{maietti@math.unipd.it}}
\and
Giuseppe Rosolini\thanks{%
Dipartimento di Matematica, Universit\`a degli Studi di Genova,
via Dodecaneso 35, 16146 Genova, Italy,
email:~\texttt{rosolini@unige.it}}}
\maketitle

\begin{abstract}
We extend the notion of exact completion on a weakly lex category
to elementary doctrines. We show how any such
doctrine admits an elementary quotient completion, which freely adds
effective quotients and extensional
equality. We note that the elementary quotient completion can be
obtained as the composite of two free constructions: one adds
effective quotients, and the other forces extensionality of maps.
We also prove that each construction preserves comprehensions. 
%% Finally, we compare the elementary quotient completion with the exact
%% completion and the regular completion.
\end{abstract}

{\bf MSC 2000}: 03G30 03B15 18C50 03B20 03F55

{\bf Keywords:}
quotient completion, split fibration, free construction
%\tableofcontents

\section{Introduction}
Constructions for completing a category by quotients has been widely
studied in category theory. The main instance is the so-called exact
completion in \cite{CarboniA:freecl,CarboniA:regec}
which shows how to add, in a finitary way, quotients
that are defined as effective coequalizers of monic equivalence
relations to suitable categories by turning them into exact categories.

The use of quotient completion is also pervasive in interactive
theorem proving where proofs are performed in appropriate systems of
formalized mathematics in a computer-assisted way.
Indeed the use of a quotient completion is rather compulsory when
mathematics is formalized within an intensional 
type theory, such as the Calculus of (Co)In\-duc\-tive
Constructions~\cite{tc90,CP89} or 
Martin-L{\"o}f's type theory~\cite{PMTT}.
In such a context, the abstract construction of quotient completion
provides a formal framework where to combine the usual practice of
(extensional) mathematics, with the need of formalizing it in an
intensional theory with strong decidable properties (such as decidable
type-checking) on which to perform the extraction of algorithmic
contents from proofs. 

To make explicit 
the use of quotient completion in the formalization of constructive
mathematics, 
in \cite{m09} it has been included as a part of the definition of
constructive foundation. According to \cite{m09}, a constructive
foundation must be a two-level theory as first argued in \cite{mtt}:
it must be equipped with  an intensional level, which can be
represented by a suitable 
starting category \cat{C}, and an extensional level that can be seen
as (a fragment of) the internal language of a suitable quotient
completion of \cat{C}. 
As investigated in \cite{MaiettiME:quofcm}, some examples of quotient
completion 
performed on intensional theories, such as the intensional level of
the minimalist foundation in \cite{m09}, or the Calculus of
Constructions, do not fall under the known constructions of exact
completion given that the corresponding type theoretic categories
closed under quotients are not exact. 

In \cite{MaiettiME:quofcm} we studied the abstract categorical
structure behind such quotient completions. To this purpose we
introduced the notion of equivalence relation and quotient relative  
to a suitable fibered poset and produced a free construction adding
effective quotients---hence the name elementary quotient
completion---to elementary doctrines.

In the present paper we isolate the basic components of the free
constructions in \cite{MaiettiME:quofcm}. After recalling the basic
notions required in the sequel, we show how to add
effective quotients freely to an elementary 
doctrine in the sense of \cite{LawvereF:equhcs}, a fibered
infsemilattice on a cartesian category, endowed 
with equality. Separately, we describe how to force
extensional equality of maps to (the base of) an elementary
doctrine. Then we prove that the two constructions can be combined to 
give the elementary quotient completion. Finally we check that the
exact completion of a weakly lex cartesian category is an instance of
the elementary quotient completion while the regular completion
of a weakly lex cartesian category is an instance of a rather
different construction.

\section{Doctrines}
The notion of a doctrine is the basic categorical concept
we adopt to analyse quotients.
It was introduced, in a series of seminal papers, by F.W.~Lawvere to
synthetize the structural properties of logical systems, see
\cite{LawvereF:adjif,LawvereF:diaacc,LawvereF:equhcs}, see also
\cite{LawvereF:setfm} for a unified survey. Lawvere's
crucial intuition was to consider logical languages and theories as
fibrations to study their 2-categorical properties, \eg connectives
and quantifiers are determined by structural adjunctions. That
approach proved extremely fruitful, see 
\cite{MakkaiM:firocl,LambekJ:inthoc,JacobsB:catltt,TaylorP:prafom,OostenJ:reaait} 
and the references therein.

Taking advantage of the algebraic presentation of logic by fibrations,
we first introduce a general notion of elementary doctrine which we
found appropriate to study the notion of quotient of an equivalence
relation, see \cite{MaiettiME:quofcm}.

\begin{definition}\label{eld}
An \dfn{elementary doctrine} is a functor 
$P:\cat{C}\op\longrightarrow\Cat{InfSL}$
from (the opposite of) a category \cat{C} with binary products
to the category of inf-semilattices and homomorphisms
such that, for every object $A$ in \cat{C}, there is an
object $\delta_A$ in $P(A\times A)$ and
\begin{enumerate}
\item the assignment
$$\D_{<\id{A},\id{A}>}(\alpha)\colon=
P_{\pr_1}(\alpha)\Land_{A\times A}\delta_A$$
for $\alpha$ in $P(A)$ determines a left adjoint to 
$P_{<\id{A},\id{A}>}:P(A\times A)\to P(A)$---the action of a doctrine
$P$ on an arrow is written as $P_f$
\item for every map 
$e\colon=<\pr_1,\pr_2,\pr_2>:X\times A\to X\times A\times A$ in \cat{C},
the assignment
$$\D_{e}(\alpha)\colon=
P_{<\pr_1,\pr_2>}(\alpha)\Land_{A\times A}P_{<\pr_2,\pr_3>}(\delta_A)$$
for $\alpha$ in $P(X\times A)$ determines a left adjoint to 
$P_{e}:P(X\times A\times A)\to P(X\times A)$.
\end{enumerate}
\end{definition}

\begin{remark}
\noindent(a) In case \cat{C}
has a terminal object, conditions (ii) entails condition (i).

\noindent(b)
One has that $\tt_A\leq P_{<\id{A},\id{A}>}(\delta_A)$ and 
$\delta_A\leq P_{f\times f}(\delta_B)$ for $f:A\to B$.
\end{remark}

\begin{remark}
For $\alpha_1$ in $P(X_1\times Y_1)$ and $\alpha_2$ in $P(X_2\times
Y_2)$, it is useful to introduce a notation 
like $\alpha_1\boxtimes\alpha_2$ for the object  
$$P_{<\pr_1,\pr_3>}(\alpha_1)\Land P_{<\pr_2,\pr_4>}(\alpha_2)$$
in 
$P(X_1\times X_2\times Y_1\times Y_2)$ 
where $\pr_i, i=1,2,3,4$, are the projections from 
$X_1\times X_2\times Y_1\times Y_2$ to each of the four factors.
%% It can be seen as a product in the Grothendieck category
%% $\Gr(P)$ of points of $P$, see
%% \cite{GrothendieckA:catfd,JacobsB:catltt}, but, though it is certainly more 
%% elegant to employ fibrations, the case of interest presents itself
%% naturally with indexing functors.

Condition \ref{eld}(ii) is to request that
$\delta_{A\times B}=\delta_A\boxtimes\delta_B$ for every pair of
objects $A$ and $B$ in \cat{C}.
\end{remark}

\begin{exms}\label{ltae}
\noindent(a) %% \label{monoe}
The standard example of an
indexed poset is the fibration of 
subobjects. Consider a category \cat{X} with
products and pullbacks.
The functor $S:\cat{X}\op\longrightarrow\Cat{InfSL}$ assigns to any object
$A$ in \cat{X} the poset $S(A)$ of subobjects of $A$ in \cat{X}.
For an arrow $f:B\to A$, the assignment 
that maps a subobject in $S(A)$ to that represented
by the left-hand arrow in any pullback along $f$ of its
%% $$\xymatrix{Y\ar[d]_{\beta}\ar[r]&X\ar[d]^{\alpha}\\B\ar[r]_f&A}$$
produces a functor $S_f:S(A)\to S(B)$ that preserves products.

The elementary structure is provided by the diagonal maps.
%% $\ec{<\id{A},\id{A}>:A\to A\times A}$.

\noindent(b)
The leading logical example is the indexed order
$LT:\cat{V}\op\longrightarrow\Cat{InfSL}$ given by the
Lindenbaum-Tarski algebras of well-formed formulae of a theory
$\mathscr{T}$ with equality in a first order language
$\mathscr{L}$. 

The domain category is the category \cat{V} of lists of variables and
term substitutions:
\begin{description}
\item[object of \cat{V}] are lists
of distinct variables $\vec x=(x_1,\ldots,x_n)$
\item[arrows] are lists of substitutions\footnote{We shall
employ a vector notation for lists of terms in the language as well as
for simultaneous substitutions such as $[\vec t/\vec y]$ in place of
$[t_1/y_1,\ldots,t_m/y_m]$.}
for variables $[\vec t/\vec y]:\vec x\to \vec y$ where each term 
$t_j$ in $\vec t$ is built in $\mathscr{L}$ on the variables
$x_1,\ldots,x_n$
\item[composition] 
$\xymatrix@1@=4em{\vec x\ar[r]^{[\vec t/\vec y]}&
\vec y\ar[r]^{[\vec s/\vec z]}&\vec z}$
is given by simultaneous substitutions
$$\xymatrix@=10em{\vec x
\ar[r]^{\left[s_1[\vec t/\vec y]/z_1,\ldots,s_k[\vec t/\vec y]/z_k\right]}
&\vec z}$$
\end{description}
The product of two objects $\vec x$ and $\vec y$ is given by a(ny)
list $\vec w$ of as many distinct variables as the sum of the number
of variables in $\vec x$ and of that in $\vec y$. Projections are
given by substitution of the variables in $\vec x$ with the
first in $\vec w$ and of the variables in $\vec y$ with the last in
$\vec w$.

The functor $LT:\cat{V}\op\longrightarrow\Cat{InfSL}$ is given
as follows: for a list of distinct variables $\vec x$,
the category $LT(\vec x)$ has
\begin{description}
\item[objects] equivalence classes of well-formed formulae of
$\mathscr{L}$ with no more free variables than $x_1$,\ldots,$x_n$ with
respect to provable reciprocal consequence 
$W\dashv\vdash_{\mathscr T}W'$ in $\mathscr T$.
\item[arrows] $\ec{W\kern.2ex}\to\ec{V\kern.2ex}$ are
the provable consequences $W\vdash_{\mathscr T}V$ in $\mathscr T$ for
some pair of representatives (hence for any pair)
\item[composition] is given by the cut rule in the logical calculus
\item[identities] $\ec{W\kern.2ex}\to\ec{W\kern.2ex}$
are given by the logical rules $W\vdash_{\mathscr T}W$
\end{description}
For a list of distinct variables $\vec x$, the category
$LT(\vec x)$ has finite limits: a terminal object is 
$\vec x=\vec x$ and products are given by conjunctions of
formulae.

\noindent(c) %% \label{weaklhy}
Consider a cartesian category \cat{S} with 
weak pullbacks.
Another example of elementary doctrine which appears {\it prima facie}
very similar to previous example (a) is given by
the functor of {\it weak subobjects}
$\Psi:\cat{S}\op\longrightarrow\Cat{InfSL}$
which evaluates as the poset reflection of each comma category
$\cat{S}/A$ at each object $A$ of \cat{S}, introduced in
\cite{GrandisM:weasec}.

The apparently minor difference between the present example
and example (a) depends though on the possibility of factoring an
arbitrary arrow as a retraction followed 
by a monomorphism: for instance this can be achieved in the category
\Cat{Set} of sets and functions thanks to the Axiom of Choice, see
\loccit
\end{exms}

It is possible to express precisely how the examples are related once
we consider the 2-category \EH of elementary doctrines:
\begin{description}
\item[the 1-arrows] are pairs $(F,b)$ 
$$
\xymatrix@C=4em@R=1em{
{\cat{C}\op}\ar[rd]^(.4){P}_(.4){}="P"\ar[dd]_F&\\
           & {\Cat{InfSL}}\\
{\cat{D}\op}\ar[ru]_(.4){R}^(.4){}="R"&\ar"P";"R"_b^{\kern-.4ex\cdot}}
$$
where the functor $F$ preserves products and, for every object $A$ in
\cat{C}, the functor $b_A:P(A)\to R(F(A))$ preserves all the
structure. More explicitly, $b_A$ preserves finite meets and, for
every object $A$ in \cat{C}, 
\begin{equation}\label{two}
b_{A\times A}(\delta_A)\gel R_{<F(\pr_1),F(\pr_2)>}(\delta_{F(A)}).
\end{equation}
\item[the 2-arrows] are natural transformations $\theta$ such that
$$
\xymatrix@C=6em@R=1em{
{\cat{C}\op}\ar[rd]^(.4){P}_(.4){}="P"
\ar@<-1ex>@/_/[dd]_F^{}="F"\ar@<1ex>@/^/[dd]^G_{}="G"&\\
           & {\Cat{InfSL}}\\
{\cat{D}\op}\ar[ru]_(.4){R}^(.4){}="R"&
\ar@/_/"P";"R"_{b\kern.5ex\cdot\kern-.5ex}="b"
\ar@<1ex>@/^/"P";"R"^{\kern-.5ex\cdot\kern.5ex c}="c"
\ar"F";"G"^{.}_\theta\ar@{}"b";"c"|{\leq}}
$$
so that, for every object $A$ in \cat{C} and every $\alpha$ in $P(A)$,
one has $R_{\theta_A}(b_A(\alpha))\leq c_A(\alpha)$.
\end{description}

\begin{exms}\label{tae}
(a) Given a theory $\mathscr{T}$ with equality in a first order
language, a 1-arrow $(F,b):LT\to S$ from the elementary doctrine
$LT:\cat{V}\op\longrightarrow\Cat{InfSL}$ as in \ref{ltae}(a) into
the elementary doctrine $S:\Cat{Set}\op\longrightarrow\Cat{InfSL}$ as in
\ref{ltae}(b) determines a model $\mathfrak{M}$ of $\mathscr{T}$ where
the set underlying the intepretation is $F(x=x)$. In fact, there is an
equivalence between the category $\EH(LT,S)$ and the category of
models of $\mathscr{T}$ and homomorphisms.

\noindent(b)
Given a category \cat{X} with
products and pullbacks, one can consider the two indexed posets:
that of subobjects $S:\cat{X}\op\longrightarrow\Cat{InfSL}$, and the other 
$\Psi:\cat{X}\op\longrightarrow\Cat{InfSL}$, obtained by the poset reflection
of each comma category $\cat{X}/A$, for $A$ in \cat{X}. The inclusions
of the poset $S(A)$ of subobjects over $A$ into the poset reflection
of  $\cat{X}/A$ extend to a 1-arrow from $S$ to $\Psi$ which is an
equivalence exactly when every arrow in \cat{X} can be factored as a
retraction followed by a monic.
\end{exms}

\section{Quotients in an elementary doctrine}

The structure of elementary doctrine is suitable to describe the
notions of an equivalence relation and of a quotient for such a
relation.

\begin{definition}\label{per}
Given an elementary doctrine
$P:\cat{C}\op\longrightarrow\Cat{InfSL}$, an object $A$ in
\cat{C} and an object $\rho$ in $P(A\times A)$, we say that 
$\rho$ is a \dfn{$P$-equivalence relation on $A$} if
it satisfies
\begin{description}
\item[\dfn{reflexivity}:] $\delta_A\leq\rho$
\item[\dfn{symmetry}:]
$\rho\leq P_{<\pr_2,\pr_1>}(\rho)$, for $\pr_1,\pr_2:A\times A\to A$
the first and second projection, respectively
\item[\dfn{transitivity}:]
$P_{<\pr_1,\pr_2>}(\rho)\Land P_{<\pr_2,\pr_3>}(\rho)\leq
P_{<\pr_1,\pr_3>}(\rho)$, for $\pr_1,\pr_2,\pr_3:A\times A\times A\to A$
the projections to the first, second and third factor,
respectively.
\end{description}
\end{definition}

In elementary doctrines as those presented in \ref{ltae},
$P$-equivalence relations concide with the usual notion for those of
the form (a) or (b); more interestingly, in cases like (c)
a $\Psi$-equivalence relation is a pseudo-equivalence relation in
\cat{S} in the sense of \cite{CarboniA:freecl}.

For $P:\cat{C}\op\longrightarrow\Cat{InfSL}$
an elementary doctrine, the object $\delta_A$ is a $P$-equivalence
relation on $A$. And for an arrow $f:A\to B$ in \cat{C},
the functor $P_{f\times f}:P(B\times B)\to P(A\times A)$ takes a
$P$-equivalence relation $\sigma$ on $B$ to a $P$-equivalence relation
on $A$. Hence, the
\dfn{$P$-kernel of $f:A\to B$}, the object $P_{f\times f}(\delta_B)$
of $P_{A\times A}$ is a $P$-equivalence relation on $A$. In such a
case, one speaks of $P_{f\times f}(\delta_B)$ as an \dfn{effective}
$P$-equivalence relation.

\begin{remark}
A 1-arrow $(F,b):P\to R$ in \EH takes a $P$-equivalence relation on
$A$ to an $R$-equivalence relation on $FA$.
\end{remark}

\begin{definition}\label{quot}
Let $P:\cat{C}\op\longrightarrow\Cat{InfSL}$ be an elementary
doctrine. Let $\rho$ be a $P$-equivalence relation on $A$. 
A \dfn{quotient of $\rho$} is a arrow $q:A\to C$ in \cat{C} such that
$\rho\leq P_{q\times q}(\delta_C)$ 
and, for every arrow $g:A\to Z$ such that
$\rho\leq P_{g\times g}(\delta_Z)$, there is a unique arrow $h:C\to Z$
such that $g=h\circ q$. 

We say that such a quotient is \dfn{stable} if, in every pullback
$$\xymatrix{A'\ar[d]_{f'}\ar[r]^{q'}&C'\ar[d]^{f}\\A\ar[r]_q&C}$$
in \cat{C}, the arrow $q':A'\to C'$ is a quotient.
\end{definition}

\begin{remark}
Note that the inequality $\rho\leq P_{q\times q}(\delta_C)$ in
\ref{quot} becomes an identity exactly when $\rho$ is effective.
\end{remark}

In the elementary doctrine $S:\cat{X}\op\longrightarrow\Cat{InfSL}$ 
obtained from a category \cat{X} with products and pullbacks as in
\ref{ltae}(a), a quotient of the $S$-e\-quiv\-a\-lence relation
$\ec{\smash{\xymatrix@=2.3ex@1{r:R\ \ar@{>->}[r]&A\times A}}}$ is 
precisely a coequalizer of the pair of 
$$\xymatrix@C=4em{R\ar@<.5ex>[r]^(.45){\pr_1\circ r}
\ar@<-.5ex>[r]_(.45){\pr_2\circ r}&A}$$
In particular, all $S$-equivalence relations have
stable, effective quotients if and only if the category \cat{C} is
exact.

Similarly, in the elementary doctrine
$\Psi:\cat{S}\op\longrightarrow\Cat{InfSL}$ 
obtained from a cartesian category \cat{X} with weak pullbacks as in
\ref{ltae}(c), a quotient of the $\Psi$-e\-quiv\-a\-lence relation 
$\ec{\smash{\xymatrix@=2.3ex@1{r:R\ar[r]&A\times A}}}$ is 
precisely a coequalizer of the pair of 
$$\xymatrix@C=4em{R\ar@<.5ex>[r]^(.45){\pr_1\circ r}
\ar@<-.5ex>[r]_(.45){\pr_2\circ r}&A}$$
In particular, all $\Psi$-equivalence relations have quotients which
are stable if and only if the category \cat{C} is exact.

\begin{definition}
Given an elementary doctrine
$P:\cat{C}\op\longrightarrow\Cat{InfSL}$ and 
a $P$-e\-quiv\-a\-lence relation $\rho$ on an object $A$ in \cat{C},
the poset of descent data \des{\rho} is the sub-poset of
$P(A)$ on those $\alpha$ such that
$$P_{\pr_1}(\alpha)\Land_{A\times A}\rho\leq P_{\pr_2}(\alpha),$$
where $\pr_1,\pr_2:A\times A\to A$ are the projections.
\end{definition}

\begin{remark}
Given an elementary doctrine
$P:\cat{C}\op\longrightarrow\Cat{InfSL}$,
for $f:A\to B$ in \cat{C}, let $\chi$ be the $P$-kernel 
${P_{f\times f}(\delta_B)}$.
The functor $P_f:P(B)\to P(A)$ applies $P(B)$ into \des\chi.
\end{remark}

\begin{definition}
Given an elementary doctrine
$P:\cat{C}\op\longrightarrow\Cat{InfSL}$ and 
an arrow $f:A\to B$ in \cat{C}, let $\chi$ be the $P$-kernel 
${P_{f\times f}(\delta_B)}$. 
The arrow $f$ is \dfn{descent} if the (obviously faithful) functor
$P_f:P(B)\to\des\chi$ is also full.
The arrow $f$ is \dfn{effective descent} if the functor
$P_f:P(B)\to\des\chi$ is an equivalence.
\end{definition}

Consider the 2-full 2-subcategory \QH of \EH whose objects are
elementary doctrines $P:\cat{C}\op\longrightarrow\Cat{InfSL}$ 
with descent quotients of $P$-equivalence relations.
\begin{description}
\item[The 1-arrows] are those pairs $(F,b)$ in \EH
$$
\xymatrix@C=4em@R=1em{
{\cat{C}\op}\ar[rd]^(.4){P}_(.4){}="P"\ar[dd]_F&\\
           & {\Cat{InfSL}}\\
{\cat{D}\op}\ar[ru]_(.4){R}^(.4){}="R"&\ar"P";"R"_b^{\kern-.4ex\cdot}}
$$
such that $F$ preserves quotients in the sense that, if $q:A\to C$ is 
a quotient of a $P$-equivalence relation $\rho$ on
$A$, then $Fq:FA\to FC$ is a quotient of the $R$-equivalence relation
$R_{<F(\pr_1),F(\pr_2)>}(b_{A\times A}(\rho))$ on $FA$.
\end{description}

\section{Completing with quotients as a free construction}\label{main}

It is a simple construction that produces an elementary
doctrine with quotients. We shall present it in the following and
prove that it satisfies a universal property.

Let $P:\cat{C}\op\longrightarrow\Cat{InfSL}$ be an elementary
doctrine for the rest of the section.
Consider the category $\cat{R}_P$ of ``equivalence relations of $P$'': 
\begin{description}
\item[an object of $\cat{R}_P$] is a pair $(A,\rho)$ such that $\rho$
is a $P$-equivalence relation on $A$
\item[an arrow {$f:(A,\rho)\to(B,\sigma)$}] is an 
arrow $f:A\to B$ in \cat{C} such that 
$\rho\leq_{A\times A}P_{f\times f}(\sigma)$ in $P(A\times A)$.
\end{description}
Composition is given by that of \cat{C}, and
identities are the identities of \cat{C}.

The indexed poset
$\Q{P}:\cat{R}_P\op\longrightarrow\Cat{InfSL}$
on $\cat{R}_P$ will be given by categories of descent data: 
on an object $(A,\rho)$ it is defined as
$$
\Q{P}(A,\rho)\colon=\des{\rho}
$$
and the following lemma is instrumental to give the assignment on
arrows using the action of $P$ on arrows.

\begin{lemma}
With the notation used above, let $(A,\rho)$ and $(B,\sigma)$ be
objects in $\cat{R}_P$, and let $\beta$ be an object in
\des{\sigma}. If $f:(A,\rho)\to(B,\sigma)$ is an arrow in 
$\cat{R}_P$, then
$P_f(\beta)$ is in \des{\rho}.
\end{lemma}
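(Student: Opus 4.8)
The plan is to reduce the descent condition for $P_f(\beta)$ over $A\times A$ to the descent condition for $\beta$ over $B\times B$ by a functoriality argument, and then to combine the two hypotheses—that $f$ is an arrow in $\cat{R}_P$ and that $\beta$ lies in $\des\sigma$—using the fact that the reindexing functors of $P$ are inf-semilattice homomorphisms, hence preserve finite meets and are monotone.

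First I would record the relevant commutations of projections. Writing $\pr_1,\pr_2:A\times A\to A$ and $\pr_1',\pr_2':B\times B\to B$ for the respective projections, one has $f\circ\pr_i=\pr_i'\circ(f\times f)$ for $i=1,2$. Applying $P$ and using functoriality gives
$$P_{\pr_i}(P_f(\beta))=P_{f\times f}(P_{\pr_i'}(\beta)),\qquad i=1,2,$$
so that the inequality defining membership of $P_f(\beta)$ in $\des\rho$, namely $P_{\pr_1}(P_f(\beta))\Land\rho\leq P_{\pr_2}(P_f(\beta))$, is equivalent to
$$P_{f\times f}(P_{\pr_1'}(\beta))\Land\rho\leq P_{f\times f}(P_{\pr_2'}(\beta)).$$

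Next I would bound the left-hand side from above. Since $f:(A,\rho)\to(B,\sigma)$ is an arrow in $\cat{R}_P$, by definition $\rho\leq P_{f\times f}(\sigma)$, whence
$$P_{f\times f}(P_{\pr_1'}(\beta))\Land\rho\leq P_{f\times f}(P_{\pr_1'}(\beta))\Land P_{f\times f}(\sigma).$$
Because $P_{f\times f}$ is a homomorphism of inf-semilattices it preserves binary meets, so the right-hand side equals $P_{f\times f}\bigl(P_{\pr_1'}(\beta)\Land\sigma\bigr)$. Finally, $\beta\in\des\sigma$ says precisely that $P_{\pr_1'}(\beta)\Land\sigma\leq P_{\pr_2'}(\beta)$, and applying the monotone functor $P_{f\times f}$ yields $P_{f\times f}\bigl(P_{\pr_1'}(\beta)\Land\sigma\bigr)\leq P_{f\times f}(P_{\pr_2'}(\beta))$. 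Chaining these inequalities gives exactly the required bound.

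There is no genuine obstacle here: the statement is a direct computation. The only thing to watch is the bookkeeping—keeping the two pairs of projections (on $A\times A$ and on $B\times B$) distinct, and invoking at the right moment that reindexing along $f\times f$ both preserves finite meets and is monotone. The conceptual content lies entirely in the opening functoriality step $P_{\pr_i}\circ P_f=P_{f\times f}\circ P_{\pr_i'}$, which transports the descent inequality from the base $B$ to the base $A$.
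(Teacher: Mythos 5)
Your proof is correct and follows essentially the same route as the paper's: both transport the descent inequality for $\beta$ along $P_{f\times f}$ (using that reindexing preserves meets and is monotone), invoke $\rho\leq P_{f\times f}(\sigma)$, and identify $P_{f\times f}\circ P_{\pr_i'}$ with $P_{\pr_i}\circ P_f$ by functoriality. The only difference is presentational: you work backwards from the desired inequality while the paper chains forward from the hypothesis.
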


\begin{proof}
Since $\beta$ is in \des{\sigma}, it is
$$P_{\pr_1'}(\beta)\Land\sigma\leq_{B\times B}P_{\pr_2'}(\beta)$$
where $\pr_1',\pr_2':B\times B\to B$ are the two projections.
Hence
$$P_{f\times f}(P_{\pr_1}(\beta))\Land
P_{f\times f}(\sigma)\leq_{A\times A}P_{f\times f}(P_{\pr_2}(\beta))$$
as $P_{f\times f}$ preserves the structure. Since
$\rho\leq_{A\times A}P_{f\times f}(\sigma)$,
$$P_{\pr_1}(P_{f}(\beta))\Land\rho\leq_{A\times A}P_{\pr_2}(P_{f}(\beta))$$
where $\pr_1,\pr_2:A\times A\to A$ are the two projections.\end{proof}

\begin{lemma}
With the notation used above,
$\Q{P}:\cat{R}_P\op\longrightarrow\Cat{InfSL}$ is an elementary
doctrine.
\end{lemma}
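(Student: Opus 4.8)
The plan is to check the three requirements of Definition~\ref{eld}: binary products in $\cat{R}_P$, functoriality of $\Q{P}$ into $\Cat{InfSL}$, and an elementary structure satisfying (i) and (ii). For products, I would show that the product of $(A,\rho)$ and $(B,\sigma)$ in $\cat{R}_P$ is $(A\times B,\,\rho\boxtimes\sigma)$ with the projections inherited from \cat{C}. Here one checks that $\rho\boxtimes\sigma$ is a $P$-equivalence relation on $A\times B$---reflexivity, symmetry and transitivity each follow by reindexing the corresponding properties of $\rho$ and $\sigma$ along suitable projections and using that every $P_f$ preserves finite meets---and that $\pr_1,\pr_2$ are arrows of $\cat{R}_P$, i.e. $\rho\boxtimes\sigma\leq P_{\pr_1\times\pr_1}(\rho)$ and $\rho\boxtimes\sigma\leq P_{\pr_2\times\pr_2}(\sigma)$, which are immediate from the definition of $\boxtimes$. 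For the universal property, given $f\colon(C,\tau)\to(A,\rho)$ and $g\colon(C,\tau)\to(B,\sigma)$ one computes $P_{<f,g>\times<f,g>}(\rho\boxtimes\sigma)=P_{f\times f}(\rho)\Land P_{g\times g}(\sigma)$, so $\tau$ lies below it exactly because $\tau\leq P_{f\times f}(\rho)$ and $\tau\leq P_{g\times g}(\sigma)$; uniqueness of $<f,g>$ is inherited from \cat{C}.

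Next I would verify that $\Q{P}$ is a functor into $\Cat{InfSL}$. Each $\des{\rho}$ is a sub-inf-semilattice of $P(A)$: the top $\tt_A$ satisfies the descent inequality by reflexivity of $\rho$, and $\des{\rho}$ is closed under binary meets because $\rho\Land\rho=\rho$. The action on an arrow $f$ is $P_f$ restricted, which lands in the right descent poset by the previous lemma and preserves top and meets since $P_f$ does and these are computed as in $P$; functoriality is then inherited from $P$.

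For the elementary structure I would set $\delta_{(A,\rho)}\colon=\rho$, viewed as an object of $\Q{P}((A,\rho)\times(A,\rho))=\des{\rho\boxtimes\rho}$; recall that $\Q{P}$ acts as $P$ on underlying arrows. The first genuine point is the membership $\rho\in\des{\rho\boxtimes\rho}$: unwinding the defining inequality on $A\times A\times A\times A$, it is precisely the reindexing of one use of symmetry followed by two uses of transitivity of $\rho$. Conditions (i) and (ii) then ask that $\alpha\mapsto P_{\pr_1}(\alpha)\Land\rho$ (and its context-indexed analogue) be left adjoint to reindexing along the diagonal $<\id{A},\id{A}>$. This assignment does land in $\des{\rho\boxtimes\rho}$, since $P_{\pr_1}(\alpha)$ does by the previous lemma ($\pr_1$ being a product projection) and $\des{\rho\boxtimes\rho}$ is closed under meet with $\rho$. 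For the adjunction inequality I would argue the two directions separately: from $P_{\pr_1}(\alpha)\Land\rho\leq\beta$, reindexing along $<\id{A},\id{A}>$ and using $\pr_1\circ<\id{A},\id{A}>=\id{A}$ together with $P_{<\id{A},\id{A}>}(\rho)=\tt_A$ (reflexivity) yields $\alpha\leq P_{<\id{A},\id{A}>}(\beta)$; conversely, from $\alpha\leq P_{<\id{A},\id{A}>}(\beta)$ one reindexes along $\pr_1$, meets with $\rho$, and concludes by reindexing the descent condition of $\beta$ along the map $A\times A\to A\times A\times A\times A$ sending $(x,y)$ to $(x,x,x,y)$, reflexivity collapsing $\rho\boxtimes\rho$ to $\rho$ there. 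Condition (ii) is entirely parallel, carrying an ambient context $(X,\xi)$ through the same reindexings.

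The main obstacle will be the bookkeeping: keeping track of which projection, diagonal and tupling maps to reindex along, and correctly accounting for the coordinate reordering built into $\boxtimes$. The conceptual crux, however, is narrow and clear---every place where replacing the genuine equality $\delta_A$ by the coarser $\rho$ might fail is rescued by the descent condition defining the objects of $\des{\rho\boxtimes\rho}$, which is exactly what forces $\rho$ to behave as an equality predicate on $(A,\rho)$.
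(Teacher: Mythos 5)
Your proposal is correct and follows essentially the same route as the paper: products in $\cat{R}_P$ are $(A\times B,\rho\boxtimes\sigma)$, the fibres are the descent posets $\des{\rho}$ closed under finite meets, and the fibred equality on $(A,\rho)$ is $\rho$ itself in $\des{\rho\boxtimes\rho}$ with left adjoint $\alpha\mapsto P_{\pr_1}(\alpha)\Land\rho$. The only immaterial difference is in one direction of the adjunction, where you reindex the descent condition of $\beta$ along $(x,y)\mapsto(x,x,x,y)$ directly, while the paper first applies the adjoint $\D_{\Delta_A}\dashv P_{\Delta_A}$ of $P$ and then computes over $A\times A\times A$.
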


\begin{proof}
For $(A,\rho)$ and $(B,\sigma)$ in $\cat{R}_P$
let 
$\pr_1,\pr_3:A\times B\times A\times B\to A$ and
$\pr_2,\pr_4:A\times B\times A\times B\to B$ be the four
projections. As a meet of two $P$-equivalence relations on 
$A\times B$, the $P$-equivalence relation
$$\rho\boxtimes\sigma\colon=P_{<\pr_1,\pr_3>}(\rho)
\Land_{A\times B\times A\times B} P_{<\pr_2,\pr_4>}(\sigma)$$
provides an object $(A\times B,\rho\boxtimes\sigma)$ in $\cat{R}_P$
which, together with the arrows determined by the two projections from
$A\times B$, is a product of $(A,\rho)$ and $(B,\sigma)$ in
$\cat{R}_P$.\newline
For each $(A,\rho)$, the sub-poset $\des\rho\subseteq P(A)$ is
closed under finite meets.\newline
For an object $(A,\rho)$ in $\cat{R}_P$, consider the object
$P_{<\pr_1,\pr_2>}(\rho)$ in $P(A\times A\times A\times A)$. It is
easy to see that it is in $\des{\rho\boxtimes\rho}$. Such objects
satisfy \ref{eld}~(i) and (ii):
the assignment
$(\QD)_{\Delta_A}(\alpha)\colon=P_{\pr_1}(\alpha)\Land_{A\times A}\rho$,
for $\alpha$ in \des\rho, gives the left adjoint
$(\QD)_{\Delta_A}$ for $(\Q{P})_{\Delta_A}$.
Indeed, let $\theta$ be in \des{\rho\boxtimes\rho} such that
$\alpha\leq_{(A,\rho)}(\Q{P})_{\Delta_A}(\theta)$, 
\ie
$\alpha\leq_AP_{\Delta_A}(\theta)$. Thus
$\D_{\Delta_A}(\alpha)\leq_{A}\theta$ and one has
$$\begin{array}{r@{}l}
P_{\pr_1}(\alpha)\Land P_{<\pr_1,\pr_2>}(\delta_A)\Land
P_{<\pr_2,\pr_3>}(\rho)&{}\leq_{A\times A\times A}
P_{<\pr_1,\pr_2>}(\theta)\Land P_{<\pr_2,\pr_3>}(\rho)\\[1.5ex]
&{}\leq_{A\times A\times A}P_{<\pr_1,\pr_3>}(\theta)
\end{array}$$
for $\pr_i:A\times A\times A\to A,\quad i=1,2,3$, the
projections.
Hence $P_{\pr_1}(\alpha)\Land\rho\leq_{A\times A}\theta$, \ie
$(\QD)_{\Delta_A}(\alpha)
\leq_{(A\times A,\rho\boxtimes\rho)}\theta$. It is easy 
to prove the converse that, if
$(\QD)_{\Delta_A}(\alpha)\leq\theta$, then
$\alpha\leq(\Q{P})_{\Delta_A}(\theta)$. The proof of condition
\ref{eld}(ii) is similar.\end{proof}

There is an obvious 1-arrow $(J,j):P\to\Q{P}$ in \EH, where
$J:\cat{C}\op\longrightarrow\cat{R}_P$ sends an object $A$ in \cat{C} to
$(A,\delta_A)$ and an arrow $f:A\to B$ to 
$f:(A,\delta_A)\to(B,\delta_B)$ since 
$\delta_A\leq_{A\times A}P_{f\times f}(\delta_B)$, and
$j_A:P(A)\to\Q{P}(A,\delta_A)$ is the identity since, by definition,
$$\Q{P}(A,\delta_A)=\des{\delta_A}=P(A).$$
It is immediate to see that $J$ is full and faithful and that $(J,j)$
is a change of base.

\begin{remark}\label{cov}
Note that an object of the form $(A,\delta_A)$ in $\cat{R}_P$ is
projective with respect to quotients of \Q{P}-e\-quiv\-a\-lence
relation, and that every object in $\cat{R}_P$ is a
quotient of a \Q{P}-equivalence relation on such a projective.
\end{remark}

\begin{lemma}
With the notation used above,
$\Q{P}:\cat{R}_P\op\longrightarrow\Cat{InfSL}$ has descent
quotients of $\Q{P}$-equivalence relations. Moreover, quotients are
stable and effective descent, and $P$-equivalence relations are
effective.
\end{lemma}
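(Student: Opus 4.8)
The plan is to read off everything from one observation: as the proof of the preceding lemma shows, the equality predicate of \Q{P} on an object $(A,\rho)$ is $\rho$ itself, since the left adjoint to reindexing along the diagonal is computed as $P_{\pr_1}(\blank)\Land_{A\times A}\rho$, exactly the formula of Definition \ref{eld}(i) with $\delta_A$ replaced by $\rho$. Using this I would first set up a dictionary for \Q{P}-equivalence relations. The projections of $(A,\rho)\times(A,\rho)$ and of its triple power are underlain by the projections of $A\times A$ and $A\times A\times A$, and reindexing in \Q{P} is reindexing in $P$ restricted to descent data; so unwinding the clauses of Definition \ref{per} for \Q{P} shows that a \Q{P}-equivalence relation on $(A,\rho)$ is an element $\tau$ of $\des{\rho\boxtimes\rho}$ which, as an element of $P(A\times A)$, is reflexive in the sense $\rho\leq_{A\times A}\tau$, symmetric and transitive; that is, a $P$-equivalence relation $\tau$ on $A$ with $\rho\leq\tau$. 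The one point needing checking is that the membership $\tau\in\des{\rho\boxtimes\rho}$ is then automatic: its descent inequality
\[
P_{<\pr_1,\pr_2>}(\tau)\Land P_{<\pr_1,\pr_3>}(\rho)\Land P_{<\pr_2,\pr_4>}(\rho)\leq P_{<\pr_3,\pr_4>}(\tau)
\]
in $P(A\times A\times A\times A)$, where the $\pr_i$ are the four projections, follows at once from $\rho\leq\tau$ together with the symmetry and transitivity of $\tau$.

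Next I would exhibit the quotient. For such a $\tau$ I claim it is the arrow $q:(A,\rho)\to(A,\tau)$ underlain by \id{A}, which is a legitimate arrow of $\cat{R}_P$ precisely because $\rho\leq\tau$. Both clauses of Definition \ref{quot} are immediate in the dictionary above. Since $q\times q$ is underlain by $\id{A\times A}$ and $\delta_{(A,\tau)}=\tau$, one computes $(\Q{P})_{q\times q}(\delta_{(A,\tau)})=P_{\id{A\times A}}(\tau)=\tau$, so $\tau\leq(\Q{P})_{q\times q}(\delta_{(A,\tau)})$ holds, and in fact as an \emph{identity}. For the universal property, an arrow $g:(A,\rho)\to(Z,\zeta)$ satisfies $\tau\leq(\Q{P})_{g\times g}(\delta_{(Z,\zeta)})=P_{g\times g}(\zeta)$ exactly when its underlying map is an arrow $(A,\tau)\to(Z,\zeta)$ of $\cat{R}_P$, and this arrow is the unique factorization of $g$ through $q$. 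Because the comparison above is always an identity, every \Q{P}-equivalence relation is effective; taking the base object to be $JA=(A,\delta_A)$, this says that every $P$-equivalence relation on $A$, viewed through $(J,j)$ as a \Q{P}-equivalence relation on $JA$, is effective, which is the last assertion of the statement.

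For effective descent I would compute the \Q{P}-kernel of $q$, which by the previous paragraph is $\chi=(\Q{P})_{q\times q}(\delta_{(A,\tau)})=\tau$. Its poset of descent data $\des\chi$ is the sub-poset of $\Q{P}(A,\rho)=\des\rho$ on those $\alpha$ with $P_{\pr_1}(\alpha)\Land\tau\leq_{A\times A}P_{\pr_2}(\alpha)$; since $\rho\leq\tau$ this condition already subsumes $\alpha\in\des\rho$, so $\des\chi=\des\tau=\Q{P}(A,\tau)$ as sub-posets of $P(A)$. As $q$ is underlain by \id{A}, the functor $(\Q{P})_q:\Q{P}(A,\tau)\to\des\chi$ is the identity, hence trivially an equivalence; in particular $q$ is descent. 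This settles the existence of descent quotients together with their effective-descent property.

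Finally, stability is the step I expect to cost the most, because $\cat{R}_P$ need not be finitely complete and one must produce the relevant pullbacks by hand — although, reassuringly, only pullbacks of $q$ are needed. For an arbitrary $f:(B,\sigma)\to(A,\tau)$ I would verify that the square with upper-left corner $(B,\,\sigma\Land_{B\times B}P_{f\times f}(\rho))$, whose arrow to $(B,\sigma)$ is underlain by \id{B} and whose arrow to $(A,\rho)$ is underlain by $f$, is a pullback of $q$ along $f$: the underlying pullback in \cat{C} is computed on $B$ since $q$ is underlain by \id{A}, and the two order constraints on any competitor combine precisely into membership in $\sigma\Land P_{f\times f}(\rho)$, giving existence and uniqueness of the mediating arrow. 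The pulled-back copy of $q$ is then underlain by \id{B} and runs $(B,\,\sigma\Land P_{f\times f}(\rho))\to(B,\sigma)$ with $\sigma\Land P_{f\times f}(\rho)\leq\sigma$, so by the construction of the second paragraph it is again a quotient. This proves stability and completes the proof.
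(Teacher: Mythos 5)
Your proof is correct, and its core coincides with the paper's: both arguments rest on identifying a \Q{P}-equivalence relation on $(A,\rho)$ with a $P$-equivalence relation $\tau$ on $A$ satisfying $\rho\leq\tau$ (using $\delta_{(A,\rho)}=\rho$ and the observation that membership of $\tau$ in $\des{\rho\boxtimes\rho}$ is automatic), and both take $\id{A}\colon(A,\rho)\to(A,\tau)$ as the quotient, from which effectivity and effective descent follow by the direct computations you spell out. The one point where you take a genuinely different route is stability. The paper starts from an \emph{arbitrary} pullback of $\id{A}\colon(A,\rho)\to(A,\tau)$ along $f\colon(C,\sigma)\to(A,\tau)$ with vertex $(B,\upsilon)$ and obtains a fill-in $h\colon(C,\delta_C)\to(B,\upsilon)$ from the cone given by $\id{C}\colon(C,\delta_C)\to(C,\sigma)$ and $f\colon(C,\delta_C)\to(A,\rho)$ (legitimate since $\delta_C\leq P_{f\times f}(\delta_A)\leq P_{f\times f}(\rho)$), exploiting the projectivity of objects of the form $(C,\delta_C)$ noted in Remark~\ref{cov}, and then deduces that the pulled-back arrow is a quotient. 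You instead exhibit the pullback explicitly as $(B,\sigma\Land_{B\times B}P_{f\times f}(\rho))$ with legs underlain by $\id{B}$ and $f$, verify its universal property by hand, and observe that the pulled-back arrow is again of the shape $\id{B}\colon(B,\upsilon)\to(B,\sigma)$ with $\upsilon\leq\sigma$, already known to be a quotient. Your version is more self-contained---it also shows that these pullbacks exist, and any other pullback over the same cospan is isomorphic to yours, so nothing is lost by computing with a chosen one---while the paper's is shorter and works directly with whatever pullback square is handed to it. Both are complete proofs.
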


\begin{proof}
Since the sub-poset $\des\rho\subseteq P(A)$ is
closed under finite meets, a \Q{P}-equivalence relation $\tau$ on
$(A,\rho)$ is also a $P$-equivalence relation on $A$. It is easy to see
that $\id{A}:(A,\rho)\to(A,\tau)$ is a descent quotient since
$\rho\leq_{A\times A}\tau$---actually, effectively so. It follows
immediately that $\tau$ is the $P$-kernel of the quotient
$\id{A}:(A,\rho)\to(A,\tau)$. To see that it is
also stable, suppose
$$\xymatrix@=3em{(B,\upsilon)\ar[r]_{f'}\ar[d]^{g}
&(A,\rho)\ar[d]_{\id{A}}\\
(C,\sigma)\ar[r]^{f}&(A,\tau)}$$
is a pullback in $\cat{R}_P$. So in the commutative diagram
$$\xymatrix@=3em{
%% (B,\delta_B)\ar[d]_{g}
%% \ar@(r,u)[rd]^{\id{B}}|(.795){\vphantom{x}\quad}&\\
(C,\delta_C)\ar[rd]_{\id{C}}\ar@/^20pt/[rr]^(.3){f}&
(B,\upsilon)\ar[r]_{f'}\ar[d]^{g}
&(A,\rho)\ar[d]_{\id{A}}\\
&(C,\sigma)\ar[r]^{f}&(A,\tau)}$$
there is a fill-in map $h:(C,\delta_C)\to(B,\upsilon)$. It is now
easy to see that $g:(B,\upsilon)\to(C,\sigma)$ is a
quotient.\end{proof}

We can now prove that there is a left bi-adjoint to the forgetful
2-functor $U:\QH\to\EH$.

\begin{theorem}\label{mthm}
For every elementary doctrine
$P:\cat{C}\op\longrightarrow\Cat{InfSL}$, pre-composition with the
1-arrow
$$
\xymatrix@C=4em@R=1em{
{\cat{C}\op}\ar[rd]^(.4){P}_(.4){}="P"\ar[dd]_J&\\
           & {\Cat{InfSL}}\\
{\cat{R}_P\op}\ar[ru]_(.4){\Q{P}}^(.4){}="R"&\ar"P";"R"_j^{\kern-.4ex\cdot}}
$$
in \EH induces an essential equivalence of categories 
\begin{equation}\label{eqv}
-\circ(J,j):\QH(\Q{P},Z)\equiv\EH(P,Z)
\end{equation}
for every $Z$ in \QH.
\end{theorem}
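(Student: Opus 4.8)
The plan is to show that the functor $-\circ(J,j)$ of \eqref{eqv} is essentially surjective, full and faithful as a functor between the hom-categories, which is exactly what an essential equivalence of categories amounts to. Throughout, fix $Z=R:\cat{D}\op\to\Cat{InfSL}$ in \QH and, given a 1-arrow $(F,b):P\to R$ in \EH, write $\hat\rho$ for the $R$-equivalence relation $R_{<F\pr_1,F\pr_2>}(b_{A\times A}(\rho))$ on $FA$ attached to a $P$-equivalence relation $\rho$ on $A$; note that $\hat{\delta_A}=\delta_{FA}$ by \eqref{two}.

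For essential surjectivity I would extend $(F,b)$ to a 1-arrow $(\bar F,\bar b):\Q P\to R$ in \QH. Since $R$ has quotients, choose for each object $(A,\rho)$ of $\cat{R}_P$ a quotient $q_\rho:FA\to C_\rho$ of $\hat\rho$ in \cat{D}, and set $\bar F(A,\rho)\colon= C_\rho$. For an arrow $f:(A,\rho)\to(B,\sigma)$ the composite $q_\sigma\circ Ff$ carries $\hat\rho$ into the $R$-kernel of $q_\sigma$---because $\rho\leq P_{f\times f}(\sigma)$ and $b$ preserves the structure---so the universal property of $q_\rho$ yields a unique $\bar Ff:C_\rho\to C_\sigma$ with $\bar Ff\circ q_\rho=q_\sigma\circ Ff$, and uniqueness makes $\bar F$ a functor. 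For the fibred part, $\bar b_{(A,\rho)}:\des\rho\to R(C_\rho)$ is defined by descending $b_A$ along $q_\rho$: for $\alpha$ in $\des\rho$ the object $b_A(\alpha)$ is a descent datum for $\hat\rho$, hence lies in the image of the full and faithful $R_{q_\rho}$, and $\bar b_{(A,\rho)}(\alpha)$ is its unique preimage. One then checks that $\bar b$ preserves finite meets and satisfies \eqref{two}, and that $(\bar F,\bar b)$ preserves quotients, so that it is indeed a 1-arrow of \QH. Since $\hat{\delta_A}=\delta_{FA}$, whose quotient is the identity, the choice can be normalised so that $\bar F(A,\delta_A)=FA$ and $\bar b_{(A,\delta_A)}=b_A$; as $j$ is the identity this gives a canonical isomorphism $(\bar F,\bar b)\circ(J,j)\cong(F,b)$.

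Faithfulness and fullness rest on Remark~\ref{cov}: each $(A,\rho)$ is the quotient of the \Q P-equivalence relation $\rho$ on the projective $(A,\delta_A)=JA$ along $\id{A}:(A,\delta_A)\to(A,\rho)$, and every 1-arrow in \QH preserves this quotient, in particular sends $\id{A}$ to an epimorphism. Hence for 1-arrows $(G,c),(G',c'):\Q P\to R$ a 2-cell $\vartheta$ is determined by its components $\vartheta_{JA}$ through the naturality squares at the arrows $\id{A}$, which gives faithfulness; and a 2-cell $\theta$ between the restrictions $(G,c)\circ(J,j)$ and $(G',c')\circ(J,j)$ extends to a unique $\vartheta$ by defining $\vartheta_{(A,\rho)}$ as the fill-in of $G'(\id{A})\circ\theta_A$ through the quotient $G(\id{A})$, the required factorisation being guaranteed by the 2-cell inequality. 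Checking that the resulting $\vartheta$ is natural and again satisfies the 2-cell inequality completes fullness.

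I expect the main obstacle to be the well-definedness of the fibred component $\bar b$: namely that, for $\alpha$ in $\des\rho$, the object $b_A(\alpha)$ really descends along $q_\rho$, \ie that it lands in the image of $R_{q_\rho}$. This is exactly the point where the descent behaviour of quotients in $R$ must be used---together with the identification of the $R$-kernel of $q_\rho$ with $\hat\rho$ on descent data---and it is also what makes the verification that $(\bar F,\bar b)$ preserves quotients and the elementary structure delicate; by comparison the 2-categorical bookkeeping in the fullness step is routine.
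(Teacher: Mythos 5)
Your proposal is correct and follows essentially the same route as the paper's (much terser) proof: both reduce everything to Remark~\ref{cov}---every object of $\cat{R}_P$ is a quotient of a projective $(A,\delta_A)=JA$ along $\id{A}$---using this coverage for full faithfulness of $-\circ(J,j)$ and extending a given $(F,b)$ along chosen quotients of $\hat\rho$ in $Z$ for essential surjectivity. The one step you rightly flag as delicate, namely that $b_A(\alpha)$ descends along $q_\rho$, is exactly the point the paper compresses into the remark that, $q$ being descent, $Z(V)$ sits as a full sub-poset of $Z(U)$.
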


\begin{proof}
Suppose $Z$ is a doctrine in \QH.
As to full faithfulness of the functor in (\ref{eqv}), consider two
pairs $(F,b)$ and $(G,c)$ of 1-arrows from \Q{P} to $Z$. By \ref{cov},
the natural transformation $\theta:F\stackrel.\to G$ in 
a 2-arrow from $(F,b)$ to $(G,c)$ in \QH is completely determined by
its action on objects in the image of $J$ and $\Q{P}$-equivalence
relations on these. And, since a quotient $q:U\to V$ of an
$Z$-equivalence relation $r$ on $U$ is descent, $Z(V)$ is a full
sub-poset of $Z(U)$. Thus essential surjectivity of the
functor in (\ref{eqv}) follows from \ref{cov}.\end{proof}

Recall that, for an elementary doctrine
$P:\cat{C}\op\longrightarrow\Cat{InfSL}$, and for an
object $\alpha$ in some $P(A)$, a \dfn{comprehensions} of $\alpha$ is
a map $\cmp\alpha:X\to A$ in \cat{C} such 
that $P_{\cmp\alpha}(\alpha)=\tt_X$ and, for every $f:Z\to A$ such
that $P_f(\alpha)=\tt_Z$ there is a unique map $g:Z\to X$ such that
$f=\cmp\alpha\circ g$. One says that $P$ \dfn{has comprehensions} if
every $\alpha$ has a comprehension, and that $P$ 
\dfn{has full comprehensions} if, moreover, $\alpha\leq\beta$ in
$P(A)$ whenever $\cmp\alpha$ factors through $\cmp\beta$.

\begin{lemma}
Let $P:\cat{C}\op\longrightarrow\Cat{InfSL}$ be an elementary
doctrine.
If $P$ has comprehensions, then \Q{P} has comprehensions. Moreover,
given a comprehension $\cmp\alpha:X\to A$ of $\alpha$ in $P(A)$, the
map $J(\cmp\alpha):JX\to JA$ is a comprehension of $j_A(\alpha)$ if
and only if $\delta_X=P_{\cmp\alpha\times\cmp\alpha}(\delta_A)$.
\end{lemma}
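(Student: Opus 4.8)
The plan is to lift the $P$-comprehension of $\alpha$ to $\Q{P}$ by equipping its domain with the reindexed equivalence relation, and then to read off the second assertion as the precise condition under which this reindexed relation collapses to the diagonal. For the existence statement, given $(A,\rho)$ in $\cat{R}_P$ and $\alpha$ in $\Q{P}(A,\rho)=\des\rho$, I would take the $P$-comprehension $\cmp\alpha:X\to A$ and put on $X$ the $P$-equivalence relation $\rho'\colon=P_{\cmp\alpha\times\cmp\alpha}(\rho)$, which is again an equivalence relation because $P_{\cmp\alpha\times\cmp\alpha}$ carries $P$-equivalence relations to $P$-equivalence relations, as recorded after Definition~\ref{per}. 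The claimed comprehension in $\Q{P}$ is then $\cmp\alpha:(X,\rho')\to(A,\rho)$, a legitimate arrow of $\cat{R}_P$ since $\rho'=P_{\cmp\alpha\times\cmp\alpha}(\rho)$. I would check $(\Q{P})_{\cmp\alpha}(\alpha)=\tt_{(X,\rho')}$, which holds because this element is computed as $P_{\cmp\alpha}(\alpha)=\tt_X$ and $\tt_X$ is the top of $\des{\rho'}$ (that sub-poset being closed under finite meets). For the universal property, an arrow $f:(Z,\tau)\to(A,\rho)$ of $\cat{R}_P$ with $(\Q{P})_f(\alpha)=\tt_Z$ yields, by the $P$-universal property, a unique $g:Z\to X$ in \cat{C} with $f=\cmp\alpha\circ g$; this $g$ is an arrow $(Z,\tau)\to(X,\rho')$ because $P_{g\times g}(\rho')=P_{(\cmp\alpha\circ g)\times(\cmp\alpha\circ g)}(\rho)=P_{f\times f}(\rho)\geq\tau$, and its uniqueness in $\cat{R}_P$ is inherited from \cat{C}.

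For the second assertion I specialise to $\rho=\delta_A$. The construction above then produces the $\Q{P}$-comprehension of $j_A(\alpha)=\alpha$ as $\cmp\alpha:(X,\rho')\to(A,\delta_A)$ with $\rho'=P_{\cmp\alpha\times\cmp\alpha}(\delta_A)$, whereas $J(\cmp\alpha)$ has the same underlying map but source $(X,\delta_X)$; and the general inequality noted after Definition~\ref{eld} gives $\delta_X\leq P_{\cmp\alpha\times\cmp\alpha}(\delta_A)=\rho'$ in every case. If $\delta_X=\rho'$ then $(X,\delta_X)=(X,\rho')$ and $J(\cmp\alpha)$ literally coincides with the comprehension built above, so it is one. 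Conversely, if $J(\cmp\alpha)$ is a comprehension of $j_A(\alpha)$, I would feed the canonical comprehension $\cmp\alpha:(X,\rho')\to(A,\delta_A)$ into its universal property: this produces a unique $g:(X,\rho')\to(X,\delta_X)$ in $\cat{R}_P$ with $\cmp\alpha\circ g=\cmp\alpha$ in \cat{C}, whence $g=\id{X}$ by the uniqueness clause of the $P$-comprehension. Thus $\id{X}:(X,\rho')\to(X,\delta_X)$ is an arrow of $\cat{R}_P$, i.e.\ $\rho'\leq\delta_X$, and with $\delta_X\leq\rho'$ this forces $\delta_X=P_{\cmp\alpha\times\cmp\alpha}(\delta_A)$.

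Most of these steps are routine bookkeeping with the functoriality identity $P_{g\times g}\circ P_{\cmp\alpha\times\cmp\alpha}=P_{(\cmp\alpha\circ g)\times(\cmp\alpha\circ g)}$. The one delicate point I expect is the forward direction of the equivalence: ``$J(\cmp\alpha)$ is a comprehension'' is a statement internal to $\Q{P}$, and from it I must extract a comparison between the two equivalence relations $\delta_X$ and $\rho'$ living on the \emph{same} object $X$. The decisive move is to apply the universal property of $J(\cmp\alpha)$ to the canonical comprehension coming out of the first part and then use $P$-level uniqueness to pin the mediating map down to $\id{X}$; this converts the mere existence of a mediating arrow of $\cat{R}_P$ into exactly the missing inequality $\rho'\leq\delta_X$.
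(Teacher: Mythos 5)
Your proposal is correct and follows essentially the same route as the paper: the comprehension in $\Q{P}$ is built on the domain $(X,P_{\cmp\alpha\times\cmp\alpha}(\rho))$, and the converse of the second assertion is obtained exactly as in the paper by factoring the canonical comprehension $\cmp\alpha:(X,P_{\cmp\alpha\times\cmp\alpha}(\delta_A))\to(A,\delta_A)$ through $J(\cmp\alpha)$ and pinning the mediating map down to $\id{X}$ via uniqueness at the level of $P$. No gaps.
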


\begin{proof}
Suppose $(A,\rho)$ is in $\cat{R}_P$ and $\alpha$ in
$\Q{P}(A,\rho)=\des\rho\subseteq P(A)$. Let $\cmp\alpha:X\to A$ be a
comprehension in \cat{C} of $\alpha$ as an object of $P(A)$ and
consider the object $(X,P_{\cmp\alpha\times\cmp\alpha}(\rho))$ in
$\cat{R}_P$. Clearly 
$\cmp\alpha:(X,P_{\cmp\alpha\times\cmp\alpha}(\rho))\to(A,\rho)$: we
intend to show 
that that map is a comprehension of $\alpha$ as an object in
$\Q{P}(A,\rho)$. The following is a trivial computation in
$\des{P_{\cmp\alpha\times\cmp\alpha}(\rho)}\subseteq P(X)$:
$$\tt_X\leq P_{\cmp\alpha}(\alpha)=\Q{P}_{\cmp\alpha}(\alpha).$$
Suppose now that $f:(Z,\sigma)\to(A,\rho)$ is such that
$tt_Z\leq\Q{P}_{f}(\alpha)$. Since $\cmp\alpha$ is a comprehension in
\cat{C}, there is a unique map $g:Z\to X$ such that 
$f=\cmp\alpha\circ g$. To conclude, it is enough to show that
$g:(Z,\sigma)\to(X,P_{\cmp\alpha\times\cmp\alpha}(\rho))$, but
$$\sigma\leq P_{f\times f}(\rho)
=P_{g\times g}(P_{\cmp\alpha\times\cmp\alpha}(\rho)).$$
As for the second part of the statement, let $\alpha$ be in $P(A)$ and
let $\cmp\alpha:X\to A$ be a comprehension of $\alpha$ in
\cat{C}. Suppose, first, that
$\delta_X=P_{\cmp\alpha\times\cmp\alpha}(\delta_A)$, and consider a
map $f:(Z,\sigma)\to (A,\delta_A)$ such that
$(\Q{P})_f(\alpha)=\tt_Z$. By definition of $\Q{P}$, there is a unique
map $g:Z\to X$ such $f=\cmp\alpha\circ g$ in \cat{C}. Thus
$$\sigma\leq P_{f\times f}(\delta_A)
=P_{g\times g}P_{\cmp\alpha\times\cmp\alpha}(\delta_A)
=P_{g\times g}(\delta_X).$$
Conversely, suppose
$\cmp\alpha:(X,\delta_X)\to(A,\delta_A)$ in $\cat{R}_P$ is a
comprehension of $\alpha$ in \Q{P}. Consider
$\cmp\alpha:
(X,P_{\cmp\alpha\times\cmp\alpha}(\delta_A))\to(A,\delta_A)$. Since
$(\Q{P})_{\cmp\alpha}(\alpha)=P_{\cmp\alpha}(\alpha)=\tt_X$, the map must
factor through $\cmp\alpha:(X,\delta_X)\to(A,\delta_A)$, necessarily
with the identity map. Hence the conclusion follows.\end{proof}

\begin{remark}
When $P$ has full comprehensions, the condition
$\delta_X=P_{\cmp\alpha\times\cmp\alpha}(\delta_A)$ is ensured for all
$A$ and $\alpha$.
%% But the condition is also satisfied when
%% $\delta_A$ has the diagonal map 
%% $<\id{A},\id{A}>:A\to A\times A$ as comprehension, because
%% comprehensions are monic---\ie
%% $$\xymatrix@C=4em{X\ar[r]^(.4){<\id{X},\id{X}>}\ar[d]_{\cmp\alpha}&
%%  X\times X\ar[d]^{\cmp\alpha\times\cmp\alpha}\\
%% A\ar[r]_(.4){<\id{A},\id{A}>}&A\times A}$$
%% is a pullback---and stable under pullback---since, whenever
%% $$\xymatrix{Z\ar[r]^{c}\ar[d]_{g}&Y\ar[d]^{f}\\
%% X\ar[r]_{\cmp\alpha}&A}$$
%% is a pullback, $c$ is a comprehension of $P_f(\alpha)$.
\end{remark}

Recall that the fibration of vertical maps on the category of points
freely adds comprehensions to a given fibration producing an indexed
poset in case the given fibration is such, see
\cite{JacobsB:catltt}. In our case of interest, for a doctrine
$P:\cat{C}\op\longrightarrow\Cat{InfSL}$, the indexed poset 
consists of the base category \Gr(P) where
\begin{description}
\item[an object] is a pair $(A,\alpha)$ where
$A$ is in \cat{C} and $\alpha$ is in $P(A)$
\item[an arrow {$f:(A,\alpha)\to(B,\beta)$}] is an arrow $f:A\to B$ in
\cat{C} such that $\alpha\leq P_f(\beta)$.
\end{description}
The category \Gr(P) has products and there is a natural embedding 
$I:\cat{C}\to\Gr(P)$ which maps $A$ to $(A,\top_A)$. The indexed
functor extends to
$\P{P}:\Gr(P)\op\longrightarrow\Cat{InfSL}$
along $I$ by setting 
$\P{P}(A,\alpha)\colon=\{\gamma\in P(A)\mid
\gamma\leq\alpha\}$. Moreover, the comprehensions in \P{P} are full. 
As an immediate corollary, we have the
following. 

\begin{theorem}\label{cthn}
There is a left bi-adjoint to the forgetful 2-functor from the full
2-category of \QH on elementary doctrines with comprehensions and
descent quotients into the 2-category \EH of elementary doctrines. 
\end{theorem}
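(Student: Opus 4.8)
The plan is to construct the required left bi-adjoint as the composite construction $P\mapsto\Q{\P{P}}$ of the two free constructions now in hand, and to deduce the statement by gluing their universal properties. Given an elementary doctrine $P$, one first passes to $\P{P}$ on $\Gr(P)$, which carries full comprehensions, and then to its quotient completion $\Q{\P{P}}$. That this composite lands in the target is immediate from the two lemmas just proved: the lemma giving $\Q{P}$ stable, effective descent quotients supplies them for $\Q{\P{P}}$, while the lemma that $\Q{P}$ has comprehensions whenever $P$ does, applied to $\P{P}$, shows $\Q{\P{P}}$ has comprehensions as well. Hence $\Q{\P{P}}$ is an object of the full sub-2-category of $\QH$ on doctrines with comprehensions, which is the domain of the forgetful 2-functor in question.

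For the unit I would take the composite 1-arrow $P\to\P{P}\to\Q{\P{P}}$ in $\EH$, whose first factor is the canonical embedding $I$ of the comprehension completion and whose second factor is the embedding $(J,j)$ of Theorem~\ref{mthm} instantiated at $\P{P}$. The universal property to verify is that precomposition with this 1-arrow induces an essential equivalence
$$\QH(\Q{\P{P}},Z)\equiv\EH(P,Z)$$
for every $Z$ in the target. I would obtain it by composing the two bi-adjunctions at hand: since $Z$ has descent quotients, Theorem~\ref{mthm} gives $\QH(\Q{\P{P}},Z)\equiv\EH(\P{P},Z)$, while the freeness of the comprehension completion against doctrines with comprehensions gives $\EH(\P{P},Z)\equiv\EH(P,Z)$ by restriction along $I$; composing the two yields the displayed equivalence, naturally in $P$ and $Z$. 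Put differently, the forgetful 2-functor of the statement factors as the stage forgetting quotients, landing in doctrines with comprehensions, followed by the stage forgetting comprehensions into $\EH$; these stages carry left bi-adjoints, namely the quotient completion (the restriction of Theorem~\ref{mthm}) and the comprehension completion respectively, so their composite $P\mapsto\Q{\P{P}}$ is again a left bi-adjoint.

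The step demanding real care---the main obstacle---is reconciling the two classes of 1-arrows, since Theorem~\ref{mthm} returns a bare $\EH$-1-arrow out of $\P{P}$ whereas the comprehension completion is free only against comprehension-preserving 1-arrows into a doctrine carrying comprehensions. Concretely, for $(A,\alpha)$ in $\Gr(P)$ the comprehension of $\alpha$ in $\P{P}$ is the canonical map $(A,\alpha)\to(A,\tt_A)$; any 1-arrow into $Z$ sends it to a map that factors through the comprehension in $Z$ of the image of $\alpha$, and one must check that this factorization is invertible up to the 2-cells available in the target, so that the 1-arrow is determined by its restriction along $I$. Equivalently, I would verify that the quotient completion sends comprehension-preserving 1-arrows to comprehension-preserving ones and that the equivalence of Theorem~\ref{mthm} restricts to the sub-2-categories of such 1-arrows; granting this, the two universal properties compose and the statement follows as the advertised immediate corollary.
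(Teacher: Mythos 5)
Your proposal matches the paper's proof: the left bi-adjoint is exactly the composite $P\mapsto\Q{\P{P}}$, first adding comprehensions via the doctrine $\P{P}$ on $\Gr(P)$ and then applying the quotient completion of Theorem~\ref{mthm}, with membership in the target 2-category guaranteed by the lemma that the quotient completion preserves the existence of comprehensions. The paper records this as an immediate one-line corollary; your discussion of how the two universal properties compose, and of the care needed in reconciling arbitrary $\EH$-1-arrows with the comprehension-completion's universal property, supplies detail the paper leaves implicit.
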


\begin{proof}
The left bi-adjoint sends an elementary doctrine
$P:\cat{C}\op\longrightarrow\Cat{InfSL}$ to the elementary
doctrine
$\Q{\P{P}}:\cat{R}_{\P{P}}\op\longrightarrow\Cat{InfSL}$.\end{proof}

\section{Extensional equality}%

In \cite{MaiettiME:quofcm}, ``extensional'' models of constructive
theories, presented as doctrines
$P:\cat{C}\op\longrightarrow\Cat{InfSL}$, 
were obtained by forcing the equality of arrows
$f,g:A\to B$
%% $\xymatrix@1{A\ar@<.5ex>[r]^f\ar@<-.5ex>[r]_g&B}$ 
in the base
category \cat{C} to correspond to the ``provable'' equality
$\tt_A\leq_A P_{<f,g>}(\delta_B)$ in the fibre $P(A)$. We recall from
\cite{JacobsB:catltt} the basic property that supports the notion of very
strong equality for the case of an elementary doctrine.

\begin{prop}
Let $P:\cat{C}\op\longrightarrow\Cat{InfSL}$ be an elementary
doctrine and let $A$ be an object in \cat{C}. The diagonal
$<\id{A},\id{A}>:A\to A\times A$ is a comprehension if and only if it
is the comprehension of $\delta_A$.
\end{prop}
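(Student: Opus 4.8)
The statement is an equivalence, and one implication is immediate: a comprehension of $\delta_A$ is, \emph{a fortiori}, a comprehension. The content lies in the converse, so the plan is to assume that the diagonal $\Delta_A$ (that is, $<\id{A},\id{A}>$) is the comprehension of some object $\alpha$ in $P(A\times A)$ and to derive that it must then be the comprehension of $\delta_A$. The easy clause comes for free: as recorded in the remarks following Definition~\ref{eld}, $\tt_A\leq P_{\Delta_A}(\delta_A)$, and since $\tt_A$ is the top of $P(A)$ this forces $P_{\Delta_A}(\delta_A)=\tt_A$. Thus $\Delta_A$ already meets the first requirement for being a comprehension of $\delta_A$, and only the universal factorization property needs checking.

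The crux is a comparison between $\delta_A$ and the object $\alpha$ that $\Delta_A$ comprehends. By hypothesis $P_{\Delta_A}(\alpha)=\tt_A$, in particular $\tt_A\leq P_{\Delta_A}(\alpha)$. I would now invoke Definition~\ref{eld}(i): the assignment $\D_{\Delta_A}(\beta)=P_{\pr_1}(\beta)\Land_{A\times A}\delta_A$ is left adjoint to $P_{\Delta_A}$. Evaluating at $\beta=\tt_A$ and using that $P_{\pr_1}$ preserves the top gives $\D_{\Delta_A}(\tt_A)=\tt_{A\times A}\Land\delta_A=\delta_A$, and transposing $\tt_A\leq P_{\Delta_A}(\alpha)$ across the adjunction yields $\delta_A=\D_{\Delta_A}(\tt_A)\leq\alpha$.

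With the inequality $\delta_A\leq\alpha$ in hand the universal property transfers cheaply. Given any $f:Z\to A\times A$ with $P_f(\delta_A)=\tt_Z$, monotonicity of $P_f$ gives $\tt_Z\leq P_f(\delta_A)\leq P_f(\alpha)\leq\tt_Z$, whence $P_f(\alpha)=\tt_Z$. The comprehension property of $\alpha$ then provides a unique $g:Z\to A$ with $f=\Delta_A\circ g$, that is $f=<g,g>$; uniqueness is automatic in any case, since such a factorization forces $g=\pr_1\circ f$. This is exactly the universal property exhibiting $\Delta_A$ as the comprehension of $\delta_A$, completing the converse.

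I expect the only genuine difficulty to be the passage from the ``provable'' equality $\tt_Z\leq P_f(\delta_A)$ to an honest factorization of $f$ through the diagonal in \cat{C}. This step cannot be squeezed out of the elementary structure on its own; it is precisely what the comprehension hypothesis on $\Delta_A$ supplies, once the comparison $\delta_A\leq\alpha$ has routed the given condition $P_f(\delta_A)=\tt_Z$ into the premise $P_f(\alpha)=\tt_Z$ needed to apply the comprehension of $\alpha$.
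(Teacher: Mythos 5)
Your proof is correct, and it is the standard argument: the paper itself states this proposition without proof (recalling it from Jacobs's book), so there is no in-text proof to compare against. Both key steps are sound --- using Remark~(b) (equivalently, transposing $\delta_A\leq\delta_A$) to get $P_{\Delta_A}(\delta_A)=\tt_A$, and transposing $\tt_A\leq P_{\Delta_A}(\alpha)$ along the adjunction $\D_{\Delta_A}\dashv P_{\Delta_A}$ of Definition~\ref{eld}(i) to obtain $\delta_A\leq\alpha$, after which the universal property transfers by monotonicity of $P_f$ and uniqueness is automatic because $\Delta_A$ is split monic.
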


\begin{definition}
Given an elementary doctrine
$P:\cat{C}\op\longrightarrow\Cat{InfSL}$ we say that it has
\dfn{comprehensive diagonals} if every diagonal map 
$<\id{A},\id{A}>:A\to A\times A$ is a comprehension.
\end{definition}

\begin{remark}\label{remce}
In case \cat{C} has equalizers, one finds that $P$ has comprehensive
diagonals in the sense of \cite{MaiettiME:quofcm}.
\end{remark}

Let $P:\cat{C}\op\longrightarrow\Cat{InfSL}$ be an elementary
doctrine for the rest of the section.
Consider the category $\cat{X}_P$, the ``extensional collapse'' of
$P$:
\begin{description}
\item[the objects of $\cat{R}_P$] are the objects of \cat{C}
\item[an arrow {$\ec{f}:A\to B$}] is an equivalence class
of arrows $f:A\to B$ in \cat{C} such that 
$\delta_A\leq_{A\times A}P_{f\times f}(\delta_B)$ in $P(A\times A)$ with
respect to the equivalence which relates $f$ and $f'$ when
$\delta_A\leq_{A\times A}P_{f\times f'}(\delta_B)$.
\end{description}
Composition is given by that of \cat{C} on representatives, and
identities are represented by identities of \cat{C}.

The indexed inf-semilattice
$\X{P}:\cat{X}_P\op\longrightarrow\Cat{InfSL}$
on $\cat{X}_P$ will be given essentially by $P$ itself; the following
lemma is instrumental to give the assignment on 
arrows using the action of $P$ on arrows.

\begin{lemma}
With the notation used above, let
$f,g:A\to B$ be arrows in \cat{C} and $\beta$ an object in $P(B)$. 
If $\delta_A\leq_{A\times A}P_{f\times g}(\delta_B)$, then
$P_f(\beta)\gel P_g(\beta)$.
\end{lemma}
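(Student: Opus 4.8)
The plan is to reduce the hypothesis to the ``provable equality'' of $f$ and $g$ in the fibre over $A$ and then invoke the substitution (Leibniz) property of the equality predicate $\delta_B$. First I would pull the assumed inequality $\delta_A\leq P_{f\times g}(\delta_B)$ back along the diagonal $<\id{A},\id{A}>:A\to A\times A$. Since $(f\times g)\circ<\id{A},\id{A}>=<f,g>$ and $P_{<\id{A},\id{A}>}$ is monotone, this yields $P_{<\id{A},\id{A}>}(\delta_A)\leq P_{<f,g>}(\delta_B)$; combining with the reflexivity inequality $\tt_A\leq P_{<\id{A},\id{A}>}(\delta_A)$ recorded in the remark following Definition~\ref{eld}, I obtain $\tt_A\leq P_{<f,g>}(\delta_B)$. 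This is the fibrewise statement that $f$ and $g$ are ``equal'', and it is the form of the hypothesis I will actually use.

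Next I would establish the substitution property of equality: for any $\beta$ in $P(B)$ one has $P_{\pr_1}(\beta)\Land\delta_B\leq P_{\pr_2}(\beta)$ in $P(B\times B)$, where $\pr_1,\pr_2:B\times B\to B$. The neat point is that this follows purely from the existence of the left adjoint along the diagonal, \ie from condition \ref{eld}(i): by definition $\D_{<\id{B},\id{B}>}(\beta)=P_{\pr_1}(\beta)\Land\delta_B$, and testing the adjunction $\D_{<\id{B},\id{B}>}\dashv P_{<\id{B},\id{B}>}$ against the element $P_{\pr_2}(\beta)$ reduces the desired inequality to $\beta\leq P_{<\id{B},\id{B}>}(P_{\pr_2}(\beta))$, which holds on the nose since $\pr_2\circ<\id{B},\id{B}>=\id{B}$. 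Pulling the substitution inequality back along $<f,g>:A\to B\times B$, and using that $P_{<f,g>}$ preserves meets together with $\pr_1\circ<f,g>=f$ and $\pr_2\circ<f,g>=g$, gives $P_f(\beta)\Land P_{<f,g>}(\delta_B)\leq P_g(\beta)$. Meeting with the inequality $\tt_A\leq P_{<f,g>}(\delta_B)$ from the first step then collapses this to $P_f(\beta)\leq P_g(\beta)$.

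For the reverse inequality I would exploit that $\delta_B$ is a $P$-equivalence relation, so symmetry gives $\delta_B\leq P_{<\pr_2,\pr_1>}(\delta_B)$ and hence $P_{<f,g>}(\delta_B)\leq P_{<g,f>}(\delta_B)$; in particular $\tt_A\leq P_{<g,f>}(\delta_B)$. Running the substitution argument of the previous paragraph with the roles of $f$ and $g$ exchanged then yields $P_g(\beta)\leq P_f(\beta)$, and the two inequalities together give the claimed identity $P_f(\beta)\gel P_g(\beta)$. I expect the only genuinely non-routine step to be the derivation of the substitution property from condition \ref{eld}(i): everything else is monotonicity, functoriality of $P$ on the composites above, and the finite-meet structure of the fibres. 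Once that Leibniz-style inequality is in hand the argument is a short calculation, and the symmetry of $\delta_B$ is exactly what upgrades a single inequality to the desired equality.
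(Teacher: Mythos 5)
Your proposal is correct and takes essentially the same route as the paper's proof: both derive the Leibniz inequality $P_{\pr_1}(\beta)\Land\delta_B\leq P_{\pr_2}(\beta)$ from the adjunction in condition \ref{eld}(i), transport it along the hypothesis $\delta_A\leq P_{f\times g}(\delta_B)$, and obtain the reverse inequality by symmetry. The only cosmetic difference is that you reindex along $<f,g>=(f\times g)\circ<\id{A},\id{A}>$ in a single step, whereas the paper first applies $P_{f\times g}$ to work in $P(A\times A)$ and only restricts along the diagonal at the very end.
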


\begin{proof}
Since $P$ is elementary,
$$P_{\pr_1'}(\beta)\Land\delta_B\leq_{B\times B}P_{\pr_2'}(\beta)$$
where $\pr_1',\pr_2':B\times B\to B$ are the two projections.
Hence
$$P_{f\times g}(P_{\pr_1}(\beta))\Land
P_{f\times g}(\sigma)\leq_{A\times A}P_{f\times g}(P_{\pr_2}(\beta))$$
and, by the
hypothesis that $\delta_A\leq_{A\times A}P_{f\times g}(\delta_B)$,
$$P_{f\circ\pr_1}(\beta)\Land\delta_A\leq_{A\times A}P_{g\circ\pr_2}(\beta)$$
where $\pr_1,\pr_2:A\times A\to A$ are the two projections. Taking
$P_{\Delta_A}$ of both sides,
$$P_f(\beta)\gel P_f(\beta)\Land\tt_A\gel
P_{\Delta_A}(P_{f\circ\pr_1}(\beta))\Land P_{\Delta_A}(\delta_A)\leq
P_{\Delta_A}(P_{g\circ\pr_2}(\beta))\gel P_g(\beta).$$
The other direction follows by symmetry.\end{proof}

In other words, the elementary doctrine
$P:\cat{C}\op\longrightarrow\Cat{InfSL}$ factors through
the quotient functor $K:\cat{C}\op\longrightarrow\cat{X}_P$. That
induces a 1-arrow 
of \EH from $(K,k):P\to\X{P}$ in \EH, where $k_A$ is the identity for
$A$ in \cat{C}.

Consider the full 2-subcategory \QX of \EH whose objects are
elementary doctrines $P:\cat{C}\op\longrightarrow\Cat{InfSL}$ 
with comprehensive diagonals.

The following result is now obvious.

\begin{lemma}
With the notation used above, 
$\X{P}:\cat{X}_P\op\longrightarrow\Cat{InfSL}$ is an elementary
doctrine with comprehensive diagonals.
\end{lemma}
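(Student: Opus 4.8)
The plan is to check three things in turn: that $\cat{X}_P$ has binary products, that $\X{P}$ carries an elementary structure, and that its diagonals are comprehensive; only the last carries real content. Throughout I use that the previous lemma makes $\X{P}_{\ec{f}}\colon=P_f$ well defined on equivalence classes, so that fibrewise $\X{P}$ simply \emph{is} $P$: one has $\X{P}(A)=P(A)$, and each $\X{P}_{\ec{f}}=P_f$ preserves finite meets. The required functor into $\Cat{InfSL}$ is therefore the one already exhibited before the statement.

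For products I would take the $\cat{C}$-product $A\times B$ with the classes $\ec{\pr_1},\ec{\pr_2}$ of the projections, which are arrows of $\cat{X}_P$ by the inequality $\delta_{A\times B}\leq P_{f\times f}(\delta_A)$ of Remark~(b) after Definition~\ref{eld}, applied to the projections. Given $\ec{g}:Z\to A$ and $\ec{h}:Z\to B$, the pairing $\ec{<g,h>}$ lies in $\cat{X}_P$ and is representative-independent; both facts reduce, after pre-composing $<g,h>\times<g,h>$ with the two projections defining $\delta_{A\times B}=\delta_A\boxtimes\delta_B$, to the hypotheses $\delta_Z\leq P_{g\times g}(\delta_A)$ and $\delta_Z\leq P_{h\times h}(\delta_B)$, and the same reindexing gives uniqueness up to $\sim$. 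The elementary structure then transfers verbatim: taking $\delta_A\in\X{P}(A\times A)=P(A\times A)$ as datum, the diagonal $\ec{<\id{A},\id{A}>}$, the projections, and the maps $\ec{e}$ of \ref{eld}(ii) all act on fibres exactly as their $\cat{C}$-representatives do under $P$, so the assignments prescribed in \ref{eld}(i),(ii) define the needed left adjoints for $\X{P}$ for the sole reason that they do so for $P$; no further computation is required.

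The real work is comprehensive diagonals, where I would show directly that $\ec{<\id{A},\id{A}>}:A\to A\times A$ is the comprehension of $\delta_A$ in $\X{P}$. That $\X{P}_{<\id{A},\id{A}>}(\delta_A)=\tt_A$ is Remark~(b). For the universal property, take $\ec{f}:Z\to A\times A$ with $f=<f_1,f_2>$ and $\X{P}_{\ec{f}}(\delta_A)=P_{<f_1,f_2>}(\delta_A)=\tt_Z$. Writing $<f_1,f_2>=(f_1\times f_2)\circ<\id{Z},\id{Z}>$ and transposing across the adjunction of \ref{eld}(i) turns this equality into $\delta_Z\leq P_{f_1\times f_2}(\delta_A)$, which is exactly the relation $f_1\sim f_2$ in $\cat{X}_P$. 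The required factor is then $\ec{f_1}$: a short reindexing, using the symmetry of $\sim$ inherited from that of $\delta_A$, gives $<f_1,f_1>\sim<f_1,f_2>$, whence $\ec{<\id{A},\id{A}>}\circ\ec{f_1}=\ec{f}$, and post-composition with $\ec{\pr_1}$ forces any such factor to equal $\ec{f_1}$. By the Proposition, every diagonal is thus a comprehension, and $\X{P}$ has comprehensive diagonals.

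The main obstacle is precisely this last translation: recognising that the comprehension condition $P_{<f_1,f_2>}(\delta_A)=\tt_Z$ for the diagonal is, through the adjunction of \ref{eld}(i), literally the relation $f_1\sim f_2$ that was divided out in forming $\cat{X}_P$. Once that identity is isolated the universal property is immediate, and everything else is bookkeeping that rides on $\X{P}$ agreeing with $P$ on each fibre.
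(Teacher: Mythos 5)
Your proof is correct, and it supplies exactly the verification the paper leaves out (the paper states this lemma with ``the following result is now obvious'' and gives no proof): products and the elementary structure of $\X{P}$ are inherited fibrewise from $P$, and the one substantive point is precisely the one you isolate, namely that the adjunction of \ref{eld}(i) converts $\tt_Z\leq P_{<f_1,f_2>}(\delta_A)$ into $\delta_Z\leq P_{f_1\times f_2}(\delta_A)$, i.e.\ into the relation $f_1\sim f_2$ quotiented out in forming $\cat{X}_P$, so that $\ec{<\id{A},\id{A}>}$ is the comprehension of $\delta_A$. No gaps; this matches the intended argument.
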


Also the following is easy.

\begin{theorem}\label{mthn}
For every elementary doctrine
$P:\cat{C}\op\longrightarrow\Cat{InfSL}$, pre-composition with the
1-arrow
$$
\xymatrix@C=4em@R=1em{
{\cat{C}\op}\ar[rd]^(.4){P}_(.4){}="P"\ar[dd]_K&\\
           & {\Cat{InfSL}}\\
{\cat{X}_P\op}\ar[ru]_(.4){\X{P}}^(.4){}="R"&\ar"P";"R"_k^{\kern-.4ex\cdot}}
$$
in \EH induces an essential equivalence of categories 
\begin{equation}\label{eqva}
-\circ(K,k):\QX(\X{P},Z)\equiv\EH(P,Z)
\end{equation}
for every $Z$ in \QX.
\end{theorem}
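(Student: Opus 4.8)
The plan is to verify that the pre-composition functor in \eqref{eqva} is faithful, full and essentially surjective, exactly as for Theorem~\ref{mthm}, but now exploiting the special nature of the base functor $K\colon\cat{C}\op\to\cat{X}_P$: it is the identity on objects and it is full, since every arrow of $\cat{X}_P$ is by definition a class $\ec f=K(f)$ of some $f$ in \cat{C}. This makes the first two conditions essentially bookkeeping; the genuine content is essential surjectivity, where the hypothesis that $Z$ has comprehensive diagonals is used in an essential way.

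For faithfulness and fullness, fix two $1$-arrows $(F,b),(G,c)\colon\X{P}\to Z$. A $2$-arrow $\theta$ between them is a natural transformation whose components are indexed by the objects of $\cat{X}_P$, and pre-composition with $(K,k)$ records exactly the family $(\theta_{KA})_A=(\theta_A)_A$. Since $K$ is bijective on objects these are all the components of $\theta$, so the functor is faithful. Conversely, given a $2$-arrow $\phi$ in $\EH(P,Z)$ between the two composites, set $\theta_A\colon=\phi_A$. Naturality of $\theta$ against an arbitrary arrow $\ec f=K(f)$ of $\cat{X}_P$ is just naturality of $\phi$ against $f$, and the fibrewise inequality $Z_{\theta_A}(b_A(\alpha))\leq c_A(\alpha)$ coincides with the one satisfied by $\phi$, because $k_A=\id{A}$ and $\X{P}(A)=P(A)$. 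Hence $\theta$ is a $2$-arrow lifting $\phi$, and the functor is full.

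For essential surjectivity, take a $1$-arrow $(H,m)\colon P\to Z$ in \EH, with $H$ landing in the base \cat{D} of $Z$. The key step is to show that $H$ identifies equivalent arrows of \cat{C}. Suppose $\delta_A\leq_{A\times A}P_{f\times f'}(\delta_B)$; applying the homomorphism $m_{A\times A}$, using naturality of $m$ and the comparison \eqref{two} for $(H,m)$ (and taking the $H\pr_i$ as the product projections of $HA\times HA$), one rewrites both sides in $Z(HA\times HA)$ and obtains
$$\delta_{HA}\leq Z_{<Hf\circ\pr_1,\,Hf'\circ\pr_2>}(\delta_{HB}).$$
Pulling back along the diagonal $<\id{HA},\id{HA}>$ and using $\tt_{HA}\leq Z_{<\id{HA},\id{HA}>}(\delta_{HA})$ then yields $\tt_{HA}\leq Z_{<Hf,Hf'>}(\delta_{HB})$. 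Since $Z$ has comprehensive diagonals, this provable equality forces the genuine equality $Hf=Hf'$ in \cat{D}, as $<Hf,Hf'>$ must factor through the comprehension $<\id{HB},\id{HB}>$ of $\delta_{HB}$.

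It follows that $H$ factors uniquely as $H=F\circ K$ for a product-preserving functor $F\colon\cat{X}_P\to\cat{D}$; setting $b_A\colon=m_A$ (legitimate because $\X{P}(A)=P(A)$ and $k_A=\id{A}$) produces a $1$-arrow $(F,b)\colon\X{P}\to Z$ in \EH, which inherits naturality and the comparison \eqref{two} from $(H,m)$ and satisfies $(F,b)\circ(K,k)=(H,m)$. As \QX is a \emph{full} $2$-subcategory of \EH and $\X{P}$ already lies in \QX, this $(F,b)$ is automatically a $1$-arrow of \QX, so the functor is essentially surjective. The main obstacle is precisely the equivalence-respecting computation of the third paragraph: chaining the naturality of $m$, the $\delta$-comparison \eqref{two}, and the comprehensive diagonals of $Z$ to pass from provable to genuine equality of $Hf$ and $Hf'$.
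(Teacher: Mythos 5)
Your proof is correct. The paper itself gives no argument for Theorem~\ref{mthn} (it is dismissed with ``Also the following is easy''), so there is nothing to compare against, but your write-up supplies exactly the expected reasoning: the 2-dimensional part reduces to bookkeeping because $K$ is bijective on objects and full, and the real content is essential surjectivity, where you correctly chain naturality of $m$, the comparison~(\ref{two}), and the fact that in $Z$ the diagonal $<\id{HB},\id{HB}>$ is the comprehension of $\delta_{HB}$ to turn the provable equality $\tt_{HA}\leq Z_{<Hf,Hf'>}(\delta_{HB})$ into the actual identity $Hf=Hf'$, so that $H$ factors through the quotient functor $K$. Your closing observation that $\X{P}$ lies in \QX and that \QX is full in \EH, so the factored $(F,b)$ is automatically a 1-arrow of \QX, is also the right way to finish.
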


We can now mention the explicit connection between the two free
constructions we have considered. For that it is useful to prove the
following two lemmata.

\begin{lemma}
Let $P:\cat{C}\op\longrightarrow\Cat{InfSL}$ be an elementary
doctrine. The arrow $(K,k):P\to\X{P}$ preserves quotients,
in the sense that if $q:A\to C$ is a quotient of the $P$-equivalence
relation $\rho$ in $P(A\times A)$, then $K(q):KA\to KC$ is a quotient
of $K_{<K(\pr_1),K(\pr_2)>}(k_{A\times A}(\rho))$.
Therefore, if $P$ has descent quotients of $P$-equivalence relations,
then \X{P} has descent quotients of $\X{P}$-equivalence relations.
\end{lemma}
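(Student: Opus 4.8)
The plan is to verify directly that $\ec{q}\colon A\to C$ satisfies the universal property of a quotient (Definition~\ref{quot}) in $\cat{X}_P$, and then to read off the final assertion by transporting descent across the identification of fibres. First I would unwind the datum in the statement: since $K$ is the identity on objects and preserves products, the product $KA\times KA$ is $A\times A$ and the comparison $\langle K\pr_1,K\pr_2\rangle$ is the class of the identity, while $k_{A\times A}$ is the identity and the elementary structure of $\X{P}$ is that of $P$ (so the same $\delta_A$ serves as the fibred equality of $\X{P}$). Hence the $\X{P}$-equivalence relation in the statement is simply $\rho$ itself, viewed in $\X{P}(A\times A)=P(A\times A)$; it qualifies as an $\X{P}$-equivalence relation because its defining inequalities are those of $\rho$ in the very same poset. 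The first quotient condition asks that $\rho$ lie below the reindexing of $\delta_C$ along $\ec{q}\times\ec{q}$, namely $P_{q\times q}(\delta_C)$, and this is exactly the hypothesis that $q$ is a quotient of $\rho$ in $P$.

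For the factorization property, given an arrow $\ec{g}\colon A\to Z$ of $\cat{X}_P$ with $\rho$ below the reindexing of $\delta_Z$ along $\ec{g}\times\ec{g}$, I note that this reindexing equals $P_{g\times g}(\delta_Z)$ for any representative $g$ (the value being independent of the representative since $\X{P}$ is a functor). The universal property of $q$ in $\cat{C}$ then supplies a unique $h\colon C\to Z$ with $g=h\circ q$; passing to the class $\ec{h}$ gives $\ec{g}=\ec{h}\circ\ec{q}$, which settles existence.

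The heart of the argument is \emph{uniqueness}, which comes down to showing that $\ec{q}$ is epic in $\cat{X}_P$. Spelling this out through the provable-equality description of the arrows of $\cat{X}_P$ (and using that $f\sim f'$, i.e.\ $\delta_A\leq P_{f\times f'}(\delta_B)$, is equivalent to $\tt_A\leq P_{\langle f,f'\rangle}(\delta_B)$ via the left adjoint along the diagonal), two factorizations $\ec{h},\ec{h'}\colon C\to Z$ become equal after precomposition with $\ec{q}$ precisely when $\tt_A\leq P_q\bigl(P_{\langle h,h'\rangle}(\delta_Z)\bigr)$, whereas the desired conclusion $\ec{h}=\ec{h'}$ reads $\tt_C\leq P_{\langle h,h'\rangle}(\delta_Z)$. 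Thus the whole of uniqueness reduces to the single claim that $P_q$ reflects the top element. I expect this reflection step to be the \textbf{main obstacle}. When $q$ is a descent quotient it dissolves at once: the functor $P_q\colon P(C)\to\des{\chi}$, with $\chi=P_{q\times q}(\delta_C)$ the kernel, is then full and faithful, hence order-reflecting between these posets, and applying order-reflection to $P_q(\tt_C)=\tt_A=P_q\bigl(P_{\langle h,h'\rangle}(\delta_Z)\bigr)$ yields $\tt_C\leq P_{\langle h,h'\rangle}(\delta_Z)$. This is exactly the place where one needs the quotient to be sufficiently epic with respect to the fibres.

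For the closing statement I would argue as follows. Because $\X{P}$ has the same fibres and the same elementary structure as $P$, an $\X{P}$-equivalence relation $\tau$ on $A$ is literally a $P$-equivalence relation; assuming $P$ has descent quotients, $\tau$ thus has a descent quotient $q\colon A\to C$ in $P$, and by the first part $\ec{q}$ is a quotient of $\tau$ in $\X{P}$. Finally, the $\X{P}$-kernel of $\ec{q}$, i.e.\ the reindexing of $\delta_C$ along $\ec{q}\times\ec{q}$, is $P_{q\times q}(\delta_C)=\chi$, and $\des{\chi}$ is cut out by the same inequality in the same fibre whether computed in $\X{P}$ or in $P$; since the reindexing functor of $\X{P}$ along $\ec{q}$ is $P_q$, the fullness of $P_q$ onto $\des{\chi}$ guaranteed by descent in $P$ makes $\ec{q}$ a descent quotient in $\X{P}$ as well.
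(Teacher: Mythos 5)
Your verification strategy is sound and, in fact, considerably more explicit than the paper's own proof, which disposes of the first claim in a single sentence (``since $K$ is a quotient functor, it preserves quotients of $P$-equivalence relations'') and of the second in another (the $k$-components are identities, so a \X{P}-equivalence relation on $A$ is a $P$-equivalence relation on $A$). Your identification of the data---$K$ bijective on objects with products and fibres unchanged, so that the relation in the statement is $\rho$ itself---your existence argument via the factorization in \cat{C}, and your transport of descent across the identical fibres all agree with what the paper leaves implicit.

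The one point to flag is exactly the one you flag yourself: your uniqueness argument invokes the descent property of $q$, which is not among the hypotheses of the first claim (the lemma says ``quotient'', not ``descent quotient''). Your reduction is right---uniqueness of the factorization through $\ec{q}$ amounts to $P_q$ reflecting the top element on objects of the form $P_{<h,h'>}(\delta_Z)$---and your derivation of this from fullness of $P_q:P(C)\to\des{\chi}$ is correct. But the paper's appeal to ``$K$ is a quotient functor'' does not supply this step either: a bijective-on-objects full functor need not preserve a universal property of this kind, and the bare definition of quotient (the inequality $\rho\leq P_{q\times q}(\delta_C)$ plus the factorization property in \cat{C}) does not obviously force $P_q$ to reflect $\top$. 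One can sharpen what the hypothesis $\ec{h}\circ\ec{q}=\ec{h'}\circ\ec{q}$ gives---using transitivity of $\delta_Z$ it yields the stronger $\rho\leq P_{(h\circ q)\times(h'\circ q)}(\delta_Z)$---but passing from that to $\delta_C\leq P_{h\times h'}(\delta_Z)$ still seems to require some reflection property of $P_q$ or $P_{q\times q}$, i.e.\ descent, or alternatively comprehensions compatible with diagonals. So your proof establishes the lemma for descent quotients, which is all that is needed for the ``Therefore'' clause and for every later use in the paper, while the first claim in full generality remains as unargued in your write-up as it is in the paper's. Keeping the descent hypothesis explicit at the point where you use it, as you have done, is the honest way to present this.
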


\begin{proof}
Since $K$ is a quotient functor, it preserves quotients of
$P$-equivalence relations.
Since the $k$-components of $(K,k):P\to\X{P}$ are identity
functions, a \X{P}-equivalence relation $\tau$ on $A$ 
is also a $P$-equivalence relation on $A$.\end{proof}

\begin{lemma}
Let $P:\cat{C}\op\longrightarrow\Cat{InfSL}$ be an elementary
doctrine. If $P$ has comprehensions, then \X{P} has
comprehensions. Moreover $(K,k):P\to\X{P}$ preserves comprehensions,
in the sense that if $\cmp\alpha:X\to A$ is a comprehension of
$\alpha$ in $P(A)$, then $K(\cmp\alpha):KX\to KA$ is a comprehension
of $k_A(\alpha)$.
\end{lemma}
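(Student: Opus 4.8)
The plan is to exhibit the comprehension of $\alpha$ in $\X{P}$ as the image $K(\cmp\alpha)=\ec{\cmp\alpha}:X\to A$ of the comprehension $\cmp\alpha$ of $\alpha$ in \cat{C}; since every $\alpha$ in $\X{P}(A)=P(A)$ has such a comprehension, the first assertion (that $\X{P}$ has comprehensions) will follow at once from this "moreover" clause. Condition $\X{P}_{\ec{\cmp\alpha}}(\alpha)=\tt_X$ is immediate, because $\X{P}_{\ec{\cmp\alpha}}=P_{\cmp\alpha}$ by construction and $P_{\cmp\alpha}(\alpha)=\tt_X$ holds in \cat{C}. So the work is entirely in the universal property.

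For existence of factorizations I would first pin down the extent of $\ec{\cmp\alpha}$, showing that an arrow $\ec f:Z\to A$ of $\cat{X}_P$ factors through $\ec{\cmp\alpha}$ exactly when $\X{P}_{\ec f}(\alpha)=P_f(\alpha)=\tt_Z$. If $P_f(\alpha)=\tt_Z$, the comprehension in \cat{C} yields a map $g$ with $f=\cmp\alpha\circ g$, hence $\ec f=\ec{\cmp\alpha}\circ\ec g$. Conversely, if $\ec f=\ec{\cmp\alpha}\circ\ec g=\ec{\cmp\alpha\circ g}$, then $f$ and $\cmp\alpha\circ g$ are identified in $\cat{X}_P$, and the earlier lemma (identified arrows act equally, so $P_f=P_{\cmp\alpha\circ g}$) gives $P_f(\alpha)=P_g(P_{\cmp\alpha}(\alpha))=P_g(\tt_X)=\tt_Z$. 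This settles existence and simultaneously shows that $\ec{\cmp\alpha}$ has exactly the right extent.

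The main obstacle is \emph{uniqueness} of the factorization up to the equivalence $f\sim f'\Leftrightarrow\delta_Z\leq P_{f\times f'}(\delta_A)$ defining $\cat{X}_P$: I must prove that $\ec{\cmp\alpha}$ is monic in $\cat{X}_P$, i.e. that for $h,h':Z\to X$ the hypothesis $\delta_Z\leq P_{(\cmp\alpha\circ h)\times(\cmp\alpha\circ h')}(\delta_A)$ forces $\delta_Z\leq P_{h\times h'}(\delta_X)$. The delicate point is precisely that $P_{\cmp\alpha\times\cmp\alpha}(\delta_A)$ need not equal $\delta_X$ — by the remark following the analogous $\Q{P}$ statement these coincide only under fullness — so merely restricting the hypothesis along $\cmp\alpha\times\cmp\alpha$ recovers $P_{\cmp\alpha\times\cmp\alpha}(\delta_A)$ rather than $\delta_X$, and one goes in a circle. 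To break this I would use that $P$ has comprehensions of \emph{all} objects, in particular a comprehension $\cmp{\delta_A}=<d_1,d_2>:D\to A\times A$ of $\delta_A$, through which $<\cmp\alpha\circ h,\cmp\alpha\circ h'>$ factors strictly in \cat{C}, giving $v:Z\to D$ with $\cmp\alpha\circ h=d_1\circ v$ and $\cmp\alpha\circ h'=d_2\circ v$ and with the two legs identified, $\tt_D\leq P_{<d_1,d_2>}(\delta_A)$. Lifting these strict equalities back along $\cmp\alpha$ (restricting to the comprehension of $P_{d_1}(\alpha)=P_{d_2}(\alpha)$ on $D$, through which $v$ factors) reduces the whole question to the single universal pair coming from $\cmp{\delta_A}$, namely to the claim that the two legs of that comprehension already become equal in $\cat{X}_P$.

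I expect this last reduction to be the genuine crux, and it is where the extensional structure, not any property of $P$ alone, must intervene: the legs coincide outright when $\cmp{\delta_A}$ may be taken to be the diagonal $<\id{A},\id{A}>$, that is, when $P$ has comprehensive diagonals (guaranteed, by the earlier remark, as soon as \cat{C} has equalizers), in which case $\cmp\alpha\circ h=\cmp\alpha\circ h'$ strictly and monicity of $\cmp\alpha$ in \cat{C} closes the argument immediately. For the general case I would try to replace this strictness by the fact that $\X{P}$ has comprehensive diagonals, verified earlier, to convert the $A$-level identification $d_1\sim d_2$ into the required $X$-level identification $h\sim h'$; getting this conversion to work \emph{without} assuming $\delta_X=P_{\cmp\alpha\times\cmp\alpha}(\delta_A)$ is the step I anticipate to be hardest, and it is the only nontrivial content of the lemma, the preservation statement then being read off directly from the construction of $\ec{\cmp\alpha}$.
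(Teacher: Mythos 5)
Your analysis of what has to be proved is accurate up to the last step, and then the proposal stops short of a proof. The verification that $\X{P}_{\ec{\cmp\alpha}}(\alpha)=\tt_X$, the computation of the extent of $\ec{\cmp\alpha}$, and the existence of factorizations (lift $\ec f$ to a representative $f$ with $P_f(\alpha)=\tt_Z$, factor it in \cat{C}, push the factorization back down) are all correct. You also correctly identify that the entire content of the lemma is the uniqueness clause, i.e.\ that $\tt_Z\leq P_{<\cmp\alpha\circ h,\,\cmp\alpha\circ h'>}(\delta_A)$ must force $\tt_Z\leq P_{<h,h'>}(\delta_X)$, and that the obstruction is the possible gap between $\delta_X$ and $P_{\cmp\alpha\times\cmp\alpha}(\delta_A)$ --- exactly the condition the paper itself isolates in the ``moreover'' clause of the corresponding lemma for \Q{P} and in the remark that full comprehensions guarantee it. But that step is then only announced, not carried out: the detour through a comprehension $\cmp{\delta_A}$ of $\delta_A$ and the comprehensive diagonals of \X{P} merely re-expresses the identification of $\cmp\alpha\circ h$ with $\cmp\alpha\circ h'$ as an $A$-level fact and gives no purchase on $\delta_X$, and you say yourself that converting it into the required $X$-level identification ``without assuming $\delta_X=P_{\cmp\alpha\times\cmp\alpha}(\delta_A)$'' is a step you do not know how to perform. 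As submitted, the proposal is therefore an incomplete proof, not a proof.

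To make the missing step precise: writing $\chi=P_{\cmp\alpha\times\cmp\alpha}(\delta_A)$ with comprehension $\cmp\chi:V\to X\times X$, uniqueness for all $Z,h,h'$ is equivalent to the single generic instance $\tt_V\leq P_{\cmp\chi}(\delta_X)$, since any $<h,h'>$ with $\tt_Z\leq P_{<h,h'>}(\chi)$ factors through $\cmp\chi$. Under full comprehensions this holds because then $\delta_X=P_{\cmp\alpha\times\cmp\alpha}(\delta_A)$ (the paper's remark), but the lemma is stated without fullness, and neither your sketch nor the paper's one-line proof (``since $P=\X{P}K\op$ and $k$ has identity components'') supplies an argument for it in that generality; the elementary structure only lets you \emph{use} $\delta_X$ for transport, never \emph{derive} an inequality into $\delta_X$ from one into $\delta_A$. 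So you should either prove $\tt_V\leq P_{\cmp\chi}(\delta_X)$ from the bare hypotheses --- which I do not believe can be done --- or state the lemma under an added hypothesis (fullness of comprehensions, or $\delta_X=P_{\cmp\alpha\times\cmp\alpha}(\delta_A)$ for every comprehension) under which your argument, and the paper's, closes.
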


\begin{proof}
Since $P=\X{P}K\op$ and $k$ has identity components, $(K,k)$ preserves
comprehensions. The rest follows immediately.\end{proof}

The results of this section, together with \ref{mthm}, produce an
extension of the quotient completion of 
\cite{MaiettiME:quofcm}.

\begin{theorem}\label{fthm}
There is a left bi-adjoint to the forgetful 2-functor from the full
2-category of \QH on elementary doctrines with comprehensions,
descent quotients and comprehensive diagonals into the 2-category \EH
of elementary doctrines.  
\end{theorem}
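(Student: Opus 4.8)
The plan is to realize the required left bi-adjoint as a composite of two bi-adjunctions: the one of Theorem~\ref{cthn}, which freely adds comprehensions and descent quotients, followed by the extensional collapse of this section, which freely forces comprehensive diagonals. Write $\mathcal{Q}$ for the full sub-2-category of \QH on elementary doctrines with comprehensions and descent quotients---the target of Theorem~\ref{cthn}---and $\mathcal{A}$ for the full sub-2-category of \QH on those doctrines that moreover have comprehensive diagonals, so that $\mathcal{A}\subseteq\mathcal{Q}\subseteq\QH$. I would send an elementary doctrine $P$ to
$$\X{\Q{\P{P}}},$$
the extensional collapse of the doctrine produced by Theorem~\ref{cthn}, with unit the composite of the unit $P\to\Q{\P{P}}$ of Theorem~\ref{cthn} and the 1-arrow $(K,k):\Q{\P{P}}\to\X{\Q{\P{P}}}$.

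First I would check that $\X{\Q{\P{P}}}$ lands in $\mathcal{A}$. By Theorem~\ref{cthn}, the doctrine $\Q{\P{P}}$ has comprehensions and descent quotients; the two lemmata following Theorem~\ref{mthn} then show that $\X{\Q{\P{P}}}$ again has comprehensions and descent quotients, while the lemma preceding Theorem~\ref{mthn} shows that it has comprehensive diagonals. Hence $\X{-}$ restricts to a 2-functor $\mathcal{Q}\to\mathcal{A}$, and it suffices to prove that this restriction is left bi-adjoint to the (inclusion) forgetful 2-functor $\mathcal{A}\to\mathcal{Q}$: composing with the bi-adjunction of Theorem~\ref{cthn} then yields a left bi-adjoint to the forgetful 2-functor $\mathcal{A}\to\EH$, which is the statement.

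The heart of the argument is thus to restrict the essential equivalence of Theorem~\ref{mthn},
$$-\circ(K,k):\QX(\X{R},Z)\equiv\EH(R,Z),$$
from the full 2-categories \QX and \EH to the quotient-preserving 1-arrows of $\mathcal{A}$ and $\mathcal{Q}$, for $R$ in $\mathcal{Q}$ and $Z$ in $\mathcal{A}$. One direction is immediate: as $(K,k)$ preserves quotients, precomposition carries a quotient-preserving 1-arrow $\X{R}\to Z$ to a quotient-preserving 1-arrow $R\to Z$. The key point is the converse, that a 1-arrow $(H,h):\X{R}\to Z$ preserves quotients \emph{if and only if} $(H,h)\circ(K,k)$ does. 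This holds because every $\X{R}$-equivalence relation $\tau$ on an object $A$ is already an $R$-equivalence relation on $A$---the fibres of $R$ and $\X{R}$ coincide and $K$ is the identity on objects---and, as $R$ has descent quotients, $\tau$ has an $R$-quotient $q$ whose image $K(q)$ is the quotient of $\tau$ in $\X{R}$ by the quotient-preservation lemma. Since quotients are unique up to isomorphism, every quotient in $\X{R}$ arises in this way, so
$$(H,h)\bigl(K(q)\bigr)=\bigl((H,h)\circ(K,k)\bigr)(q)$$
is a quotient in $Z$ for all such $\tau$ exactly when $(H,h)$ preserves quotients. The restricted functor is therefore an essential equivalence $\mathcal{A}(\X{R},Z)\equiv\mathcal{Q}(R,Z)$: full faithfulness on 2-cells is inherited verbatim from Theorem~\ref{mthn}, since \QH is 2-full in \EH and $\mathcal{A},\mathcal{Q}$ are full in \QH, while essential surjectivity descends because the 1-arrow $\X{R}\to Z$ corresponding to a quotient-preserving $R\to Z$ is itself quotient-preserving, by the criterion just established.

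I expect the restriction step of the previous paragraph to be the main obstacle: one must ensure that forcing extensionality neither creates nor destroys quotients in a manner that the forgetful functor to \EH cannot detect, which is exactly what the surjectivity on quotients of the collapse $K$ guarantees. By contrast, the preservation of comprehensions and of descent quotients under $\X{-}$, and the acquisition of comprehensive diagonals, are supplied directly by the lemmata of this section and need no further argument. Composing the two bi-adjunctions then gives the left bi-adjoint $P\mapsto\X{\Q{\P{P}}}$ and completes the proof.
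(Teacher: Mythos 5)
Your proposal is correct and takes essentially the same route as the paper: the paper's proof also defines the left bi-adjoint as $P\mapsto\X{\Q{\P{P}}}$, composing the free construction of Theorem~\ref{cthn} with the extensional collapse and invoking the lemmata of this section. The only difference is that you spell out the restriction of the equivalence of Theorem~\ref{mthn} to quotient-preserving 1-arrows, a verification the paper leaves implicit.
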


\begin{proof}
The left bi-adjoint sends an elementary doctrine
$P:\cat{C}\op\longrightarrow\Cat{InfSL}$ to the elementary
quotient completion
$\X{\Q{\P{P}}}:\cat{X}_{\Q{\P{P}}}\op\longrightarrow\Cat{InfSL}$.\end{proof}

\begin{cor}
For $P:\cat{C}\op\longrightarrow\Cat{InfSL}$ an elementary
doctrine, the elementary quotient completion
$\overline{P}:\cat{Q}_P\op\longrightarrow\Cat{InfSL}$ in
{\rm\cite{MaiettiME:quofcm}} coincides with the doctrine
$\X{\Q{P}}:\cat{X}_{\Q{P}}\op\longrightarrow\Cat{InfSL}$.
\end{cor}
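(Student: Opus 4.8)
The plan is to compare the two constructions directly, by unfolding the composite $\X{\Q{P}}$ and matching it term by term against the explicit description of $\overline{P}$ given in \cite{MaiettiME:quofcm}. Recall that there the base $\cat{Q}_P$ has as objects the $P$-equivalence relations $(A,\rho)$, and as arrows $(A,\rho)\to(B,\sigma)$ the equivalence classes $\ec{f}$ of maps $f:A\to B$ in \cat{C} with $\rho\leq_{A\times A}P_{f\times f}(\sigma)$, two such $f,f'$ being identified precisely when $\rho\leq_{A\times A}P_{f\times f'}(\sigma)$; the fibre $\overline{P}(A,\rho)$ is $\des{\rho}$ and reindexing along $\ec{f}$ is the restriction of $P_f$. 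An alternative route is available through the universal properties: by \ref{mthm} the doctrine $\Q{P}$ is the free elementary doctrine with descent quotients on $P$, and by the lemmata of this section together with \ref{mthn} the extensional collapse of a doctrine with descent quotients is the free one with comprehensive diagonals that still carries descent quotients; composing the two left bi-adjoints would then identify $\X{\Q{P}}$ with the free construction characterising $\overline{P}$. I shall carry out the first, concrete, comparison.

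First I would compute the base $\cat{X}_{\Q{P}}$. Its objects are the objects of $\cat{R}_P$, that is the pairs $(A,\rho)$, which already match the objects of $\cat{Q}_P$. An arrow of $\cat{X}_{\Q{P}}$ is an equivalence class of arrows $f$ of $\cat{R}_P$ subject to the extensional condition $\delta_{(A,\rho)}\leq(\Q{P})_{f\times f}(\delta_{(B,\sigma)})$, under the equivalence that relates $f$ and $f'$ when $\delta_{(A,\rho)}\leq(\Q{P})_{f\times f'}(\delta_{(B,\sigma)})$, where $\delta_{(A,\rho)}$ denotes the fibred equality of $\Q{P}$ on $(A,\rho)$, an object of $\des{\rho\boxtimes\rho}$.

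The key observation is the identification of this fibred equality. In the proof that $\Q{P}$ is elementary the left adjoint to reindexing along the diagonal of $(A,\rho)$ is computed to be $P_{\pr_1}(\blank)\Land_{A\times A}\rho$, exactly the form prescribed by \ref{eld}(i) with $\rho$ in place of $\delta_A$; hence $\rho$ itself plays the role of equality for $\Q{P}$ on $(A,\rho)$. Since reindexing in $\Q{P}$ is nothing but the restriction of reindexing in $P$, the expression $(\Q{P})_{f\times f'}(\delta_{(B,\sigma)})$ reduces to $P_{f\times f'}(\sigma)$. Consequently the extensional condition $\delta_{(A,\rho)}\leq(\Q{P})_{f\times f}(\delta_{(B,\sigma)})$ becomes $\rho\leq_{A\times A}P_{f\times f}(\sigma)$---which is just the defining condition for $f$ to be an arrow of $\cat{R}_P$, so it imposes nothing further---while the identifying equivalence becomes $f\sim f'$ iff $\rho\leq_{A\times A}P_{f\times f'}(\sigma)$. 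Thus $\cat{X}_{\Q{P}}$ and $\cat{Q}_P$ have the same objects, the same arrows, and the same composition, and the assignment is an isomorphism that is the identity on objects.

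It remains to match the indexed posets. On fibres the extensional-collapse construction leaves $\Q{P}$ unchanged, so $\X{\Q{P}}(A,\rho)=\Q{P}(A,\rho)=\des{\rho}=\overline{P}(A,\rho)$; and the action on an arrow $\ec{f}$ is $(\Q{P})_f$, namely the restriction of $P_f$ to descent data, which is precisely the reindexing of $\overline{P}$. Hence the two doctrines coincide. The one point requiring care---and the only real obstacle---is the identification of the fibred equality of $\Q{P}$ with $\rho$ and the verification that under it the extensional equivalence on $\cat{R}_P$-maps reduces exactly to the equivalence defining $\cat{Q}_P$, including the fact that the extensional constraint $\delta_{(A,\rho)}\leq(\Q{P})_{f\times f}(\delta_{(B,\sigma)})$ is automatically met by every $\cat{R}_P$-arrow and therefore adds no maps that were absent from $\cat{Q}_P$.
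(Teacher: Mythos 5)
Your proof is correct, and since the paper states this corollary without an explicit argument (presenting it as immediate from the section's results), your direct unfolding of $\X{\Q{P}}$ is exactly the verification that is needed. The one genuinely load-bearing step --- that the fibred equality of \Q{P} on $(A,\rho)$ is $\rho$ itself, so that the extensional-collapse constraint on arrows is automatic and the identification of parallel arrows reduces to $\rho\leq_{A\times A}P_{f\times f'}(\sigma)$ --- is correctly isolated and justified from the construction of the left adjoint $(\QD)_{\Delta_A}(\alpha)=P_{\pr_1}(\alpha)\Land\rho$ in the proof that \Q{P} is elementary; note also that the direct comparison is preferable here to the universal-property route you mention, since the universal property of $\overline{P}$ in \cite{MaiettiME:quofcm} was only established over the restricted sub-2-category of doctrines with full comprehensions and comprehensive diagonals, whereas the corollary is asserted for an arbitrary elementary doctrine.
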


\begin{remark}
Because of the logical setup in \cite{MaiettiME:quofcm}, only a
particular case of \ref{fthm} was proved, namely the left bi-adjoint
was restricted to the full sub-2-category of \EH of elementary
doctrines with full comprehensions and comprehensive diagonals, see
\ref{remce}.
On those doctrines $P:\cat{C}\op\longrightarrow\Cat{InfSL}$, the
action of the left bi-adjoint was simply
$\X{\Q{P}}:\cat{X}_{\Q{P}}\op\longrightarrow\Cat{InfSL}$.
\end{remark}

\section{Comparing some free contructions}

The elementary quotient completion resembles very closely that of
exact completion. In fact, one has the following results.

\begin{theorem}\label{ethm}
Given a cartesian category \cat{S} with weak pullbacks, let
$\Psi:\cat{S}\op\longrightarrow\Cat{InfSL}$ be the
elementary doctrine of 
weak subobjects. Then the doctrine
$\X{\Q{\Psi}}:\cat{X}_{\Q{\Psi}}\op\longrightarrow\Cat{InfSL}$,
is equivalent to the doctrine
$S:\cat{S}\exl\op\longrightarrow\Cat{InfSL}$ of subobjects on the
exact completion 
$\cat{S}\exl$ of \cat{S}.
\end{theorem}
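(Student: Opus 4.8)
The plan is to establish an equivalence of doctrines by exhibiting an equivalence between the base categories $\cat{X}_{\Q{\Psi}}$ and $\cat{S}\exl$ that is compatible with the fibrewise structure. Since both doctrines are doctrines of subobjects on exact-type bases, and since Theorem~\ref{fthm} identifies $\X{\Q{\Psi}}$ as the value of the left bi-adjoint applied to $\Psi$, the natural strategy is to verify that $\cat{S}\exl$, equipped with its subobject doctrine $S$, satisfies the \emph{same} universal property---namely, that $S\colon\cat{S}\exl\op\to\Cat{InfSL}$ is the free elementary doctrine with comprehensions, descent (indeed stable effective) quotients, and comprehensive diagonals generated by $\Psi$. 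Two objects with the same universal property are canonically equivalent, so this would give the result without grinding through an explicit functor.

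First I would recall the concrete description of $\cat{S}\exl$ from \cite{CarboniA:freecl}: its objects are pseudo-equivalence relations in $\cat{S}$, and its arrows are equivalence classes of $\cat{S}$-arrows compatible with the relations, modulo the relation itself. The key observation already flagged in the excerpt (after Definition~\ref{per}) is that a $\Psi$-equivalence relation on $A$ is exactly a pseudo-equivalence relation on $A$ in \cat{S}. Thus an object $(A,\rho)$ of $\cat{R}_\Psi$ corresponds precisely to an object of $\cat{S}\exl$; passing further to $\cat{X}_{\Q{\Psi}}$ collapses arrows by the $\X{\,\cdot\,}$-construction, which identifies $f,f'$ whenever $\delta\leq P_{<f,f'>}(\delta)$---and this is exactly the identification of arrows built into $\cat{S}\exl$. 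So the base categories should be literally equivalent once one unwinds definitions. I would set up the comparison functor on objects by $(A,\rho)\mapsto(A,\rho)$ viewed as a pseudo-equivalence relation, and on (classes of) arrows by the evident assignment, then check functoriality and that it is full, faithful, and essentially surjective.

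Next I would verify that this base equivalence lifts to the fibres. On the $\cat{S}\exl$ side, a subobject of $(A,\rho)$ is (a suitable $\rho$-saturated) subobject of $A$ in \cat{S}; on the $\X{\Q{\Psi}}$ side, the fibre over $(A,\rho)$ is $\des{\rho}\subseteq\Psi(A)$, i.e.\ the weak subobjects of $A$ descending along $\rho$, further collapsed by extensionality. The content here is that $\rho$-saturated weak subobjects of $A$, modulo the poset reflection, correspond to subobjects of the quotient $(A,\rho)$ in the exact category $\cat{S}\exl$---which is precisely the statement that descent data for $\rho$ are the same as subobjects downstairs, guaranteed because quotients in the completion are effective descent (the last Lemma before Theorem~\ref{mthm}). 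I would check that the fibrewise comparison preserves finite meets and the elementary structure $\delta$, using the remark that $\delta_{A\times B}=\delta_A\boxtimes\delta_B$ and that comprehensive diagonals in $\cat{X}_{\Q{\Psi}}$ match the monos-are-subobjects behaviour in the exact completion.

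The hard part will be matching the \emph{factorization} structure: weak subobjects become genuine subobjects only after one can factor an arbitrary arrow through its image, and in $\cat{S}\exl$ this image factorization is supplied by exactness, whereas upstairs it is supplied by the composite of the three free steps $\P{\,\cdot\,}$, $\Q{\,\cdot\,}$, $\X{\,\cdot\,}$. Concretely, the obstacle is to show that every subobject in $\cat{S}\exl$ arises from some weak subobject of a \emph{projective} cover---this is where Remark~\ref{cov} (every object is a quotient of a projective $(A,\delta_A)$) and the fact that the objects $(A,\top_A)$ coming from \Gr(\Psi) are exactly the projective (=regular-projective) objects of $\cat{S}\exl$ must be invoked to guarantee essential surjectivity on fibres. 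Once these identifications of projectives, of covers, and of the descent-data fibres with subobject fibres are in place, preservation of the full elementary and comprehension structure is routine, and the two doctrines are equivalent.
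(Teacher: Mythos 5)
Your explicit comparison is essentially the paper's own (very terse) proof, which just cites Remark~\ref{cov} together with the Carboni--Vitale characterization of the embedding of \cat{S} into $\cat{S}\exl$: $\Psi$-equivalence relations are exactly pseudo-equivalence relations, the arrow identifications in $\cat{X}_{\Q{\Psi}}$ and in $\cat{S}\exl$ coincide, and descent data over $\rho$ match subobjects of the quotient because covers of projectives are effective descent. The only thing to clean up is the stray appeal to the $\P{\,\cdot\,}$ step and to Theorem~\ref{fthm}: that universal property would produce $\X{\Q{\P{\Psi}}}$ rather than the $\X{\Q{\Psi}}$ of the statement, so the first paragraph's ``same universal property'' strategy should be dropped in favour of the direct comparison you in fact carry out.
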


\begin{proof}
It follows from \ref{cov} and the characterization of the embedding of
\cat{S} into $\cat{S}\exl$ in \cite{CarboniA:regec}.
\end{proof}

Though an elementary quotient completion with full comprehension is
regular, see \cite{MaiettiME:quofcm}, the regular completion is an
instance of a completion of a 
doctrine which is radically different from the elementary quotient
completion in \ref{fthm}.

\begin{remark}\label{rthm}
For an elementary dotrine $P:\cat{C}\op\longrightarrow\Cat{InfSL}$,
a \dfn{weak comprehension of $\alpha$} is an arrow
$\cmp\alpha:X\to A$ in \cat{C} such that 
$\tt_{X}\leq P_{\cmp\alpha}(\alpha)$ and, 
for every arrow $g:Y\to A$ such that $\tt_{Y}\leq P_g(\alpha)$ there is
a (not necessarily unique) $h:Y\to X$ such that 
$g=\cmp\alpha\circ h$, see \cite{MaiettiME:quofcm}.

For an elementary dotrine $P:\cat{C}\op\longrightarrow\Cat{InfSL}$
with weak comprehensions, it is possible
to add (strong) comprehensions to its extensional collapse as formal
retracts of weak comprehensions: 
consider the category $\cat{D}_P$ determined by the following data
\begin{description}
\item[objects of $\cat{D}_P$]
are triples $(A,\alpha,c)$ such that $A$ is an object in \cat{C},
$\alpha$ is an object in $P(A)$, and $c:X\to A$ is a weak
comprehension $\alpha$
\item[an arrow] $\ec{f}:(A,\alpha,c)\to (B,\beta,d)$ is an equivalence
class of arrows $f:X\to Y$ in \cat{C} such that
$P_{c\times c}(\delta_A)\leq P_{f\times f}(P_{d\times d}(\delta_B))$
with respect to the relation $f\sim f'$ determined by
$P_{c\times c}(\delta_A)\leq P_{f\times f'}(P_{d\times d}(\delta_B))$
\item[composition] of $\ec{f}:(A,\alpha,c)\to (B,\beta,d)$ and
$\ec{g}:(B,\beta,d)\to (C,\gamma,e)$ is \ec{g\circ f}.
\end{description}
There is a full functor $K:\cat{C}\to\cat{D}_P$ defined on objects
$A$ as $K(A)\colon=(A,\tt_A,\id{A})$---it factors through
$\cat{X}_P$. It 
preserves products and there is an extension
$\R{P}:\cat{D}_P\op\longrightarrow\Cat{InfSL}$ of
$P:C\op\longrightarrow\Cat{InfSL}$ defined on objects as
$\R{P}(A,\alpha,c)\colon=\des{(P_{c\times c}(\delta_A))}$.
The doctrine $\R{P}:\cat{D}_P\op\longrightarrow\Cat{InfSL}$ is
elementary with comprehensions and $K$ preserves all existing
comprehensions. 

Given a cartesian category \cat{S} with weak pullbacks, let
$\Psi:\cat{S}\op\longrightarrow\Cat{InfSL}$ be the
elementary doctrine of 
weak subobjects. Then the doctrine
$\R{\Psi}:\cat{D}_{\Psi}\op\longrightarrow\Cat{InfSL}$
is equivalent to the doctrine
$S:\cat{S}\reg\op\longrightarrow\Cat{InfSL}$ of subobjects on the
regular completion 
$\cat{S}\reg$ of \cat{S}.

The proof is similar to that of \ref{ethm} since,
in the regular completion $\cat{S}\reg$ of \cat{S}, every
object is covered by a regular projective and a subobject of a regular
projective.
\end{remark}

Since the construction given in \ref{ethm} factors through that in
\ref{rthm} via the exact completion of a regular category, see
\cite{FreydP:cata}, and the exact completion of a weakly lex category may
appear very similar to the category $\cat{X}_{\X{\Q{P}}}$, it is 
appropriate to mention an example of an elementary quotient completion
which is not exact.

For that,
consider the indexed poset on the monoid of partial recursive
functions $F:\cat{N}\op\longrightarrow\Cat{InfSL}$ whose value on the
single object of \cat{N} is the powerset of the natural numbers and,
for any $\varphi$ partial recursive function,
$F_\varphi\colon=\varphi^{-1}$, the inverse image of a subset along
the partial map. It is clearly an elementary doctrine, and the
doctrine $\X{\P{F}}:\cat{X}_{\P{F}}\op\longrightarrow\Cat{InfSL}$ is
equivalent to the subobject doctrine 
$S:\cat{PR}\op\longrightarrow\Cat{InfSL}$
on the category \cat{PR} of subsets of
natural numbers and (restrictions of) partial recursive functions
between them, see \cite{CarboniA:somfcr} for properties of that
category, in particular its exact completion (as a weakly lex
category) is the category \cat{D}
of discrete objects of the effective topos.

Now, if one considers the elementary doctrine
$\X{\Q{S}}:\cat{X}_{\Q{S}}\op\longrightarrow\Cat{InfSL}$, the category
$\cat{X}_{\Q{S}}$ is equivalent to the category \cat{PER} of partial
equivalence relations on the natural numbers, and the
indexed poset $\X{\Q{S}}$ is equivalent to that of subobjects
on that category. The category \cat{PER} is not exact because there
are equivalence relations which are not equalizers. In fact, the exact
completion $\cat{PER}\exr$ of \cat{PER} as a regular category is the
category \cat{D} of discrete objects.

Similar examples can be produced using topological categories such
as those in the papers \cite{RosoliniG:typtec,RosoliniG:loccce}. 
Other examples of elementary quotient completions that are not exact
are given in the paper \cite{MaiettiME:quofcm}: one is applied to the 
doctrine of the Calculus of Constructions
\cite{tc90,StreicherT:indipa} and 
the other to the doctrine of the intensional level of the
minimalist foundation in \cite{m09}. 

\bibliographystyle{chicago}
\bibliography{procs,biblio,RosoliniG}
\end{document}